\theoremstyle{plain}
\newtheorem{lema}{Lemma}[section]
\newtheorem{prop}[lema]{Proposition}
\newtheorem{teo}[lema]{Theorem}
\newtheorem{coro}[lema]{Corollary}
\theoremstyle{remark}
\newtheorem{remark}[lema]{Remark}
\theoremstyle{definition}
\newtheorem{ej}[lema]{Example}
\def\a\Si{{\rm{a}\Sigma }}
\def\w\Si{{\rm{w}\Sigma }}
\def\w {{\textrm {w}}}
\def\min{{\text{min}}}
\def\max{{\text{max}}}
\def\des{{\mathrm{des}}}
\def\Des{{\mathrm{Des}}}
\def\a{\mathrm{a}}
\def\Int{{\mathrm{Int}}}
\def\rank{\mathrm{rank}}
\def\F{{\mathcal{F}}}
\def\R{{\mathcal{R}}}
 \newcommand{\De}{\Delta}
 \newcommand{\Si}{\Sigma}
 \newcommand{\si}{\sigma}
\begin{document}

\title[On Interval Subdivision]{The $f$- and $h$-vectors of Interval Subdivisions}

\author[I. Anwar]{Imran Anwar}
\author[S. Nazir]{Shaheen Nazir}

\address{Abdus Salam School of Mathematical Sciences\\
Government College  University\\
Lahore and National Center for Mathematics, Pakistan}

\address{Department of Mathematics\\
Lahore University of Management Sciences\\
Lahore, Pakistan}

\email{imrananwar@sms.edu.pk}

\email{shaheen.nazir@lums.edu.pk}

\begin{abstract}
The interval subdivision $\Int(\Delta)$ of a simplicial complex
$\Delta$ was introduced by Walker. We give a complete
combinatorial description of the entries of the transformation
matrices from the $f$- and $h$-vectors of $\Delta$ to the $f$- and
$h$-vectors of $\Int(\Delta)$.  We show that if $\Delta$ has a
non-negative $h$-vector then the $h$-polynomial of its interval
subdivision has only real roots.  As a consequence, we prove the
Charney-Davis conjecture for $\Int(\Delta)$, if $\Delta$ has a
non-negative reciprocal $h$-vector.

\end{abstract}\subjclass[2010]{}

\keywords{Simplicial complex, subdivision of a simplicial complex,
$f$-vector, $h$-vector}

\maketitle

\section{Introduction}
In this paper, we study the behavior of  the enumerative invariants of a simplicial complex under the interval subdivision,
 introduced by Walker \cite{walker1988canonical}.  This work is motivated from the work of Brenti and Welker
about the barycenteric subdivision of simplicial complexes \cite{brenti2008f}.
 The enumeration data e.g., $f$-, $h$-, $\gamma$-, $g$-vectors of barycentric subdivision of a simplicial complex has been extensively
studied in the literature, see \cite{stanley1992subdivisions, brenti2008f, kubitzke2009lefschetz,  murai2010face, nevo2011gamma, petersen2015eulerian}.
  Let $\Delta$ be a $(d-1)$-dimensional simplicial complex  on the
ground set $V$. The interval subdivision $\Int(\Delta)$ of
$\Delta$ is the simplicial complex on the ground set
$I(\Delta\setminus \{\emptyset\})$, where $I(\Delta\setminus
\{\emptyset\}):=\{[A, B]\ | \ \emptyset\neq A \subseteq B \in
\Delta \}$
 is  partially  ordered by
inclusion, that is $[A,B]\subseteq [A',B'] \in I(\Delta\setminus
{\emptyset}) $ if and only if $A'\subseteq A\subseteq B\subseteq B'$.
By Walker \cite[Theorem 6.1(a)]{walker1988canonical}, the simplicial
complex of all chains in the partially ordered set
$I(\Delta\setminus \{\emptyset\})$ is a subdivision of $\Delta$. It can
be noted that this subdivision is the special case $N=1$ of the
simplicial complex considered in \cite[Fig.
1.2]{cheeger1984curvature}. The aim of this article is to analyze the behavior of the $f$- and $h$-vectors moving from $\Delta$ to
$\Int(\Delta)$. In the  main result of this paper, we show that if the $h$-vector of a simplicial complex
is non-negative then the $h$-polynomial of its interval subdivision has only real roots. This is done by showing that a certain class of  the refined $j$-Eulerian  polynomials of type $B$  (defined in Section 2) is compatible.
 \\
The paper is organized as follows. In Section $2$, we give a formula of the $f$-vector of the interval subdivision $\Int(\Delta)$ in terms
 of the $f$-vector of the simplicial complex $\Delta$.   In Section $3$, we study the transformation matrix of the  $h$-vector of the interval subdivision $\Int(\Delta)$.
 We give an interpretation of the coefficients of the  transformation matrix of the $h$-vector in terms of the refined  Eulerian numbers of type $B$.
 It is well known that these coefficients are $j$-Eulerian numbers of type $A$ in the case of barycentric subdivision, see  \cite{stanley1986enumerative, brenti2008f}. Along the way, we also show that if the $h$-vector of the simplicial complex $\Delta$ satisfies the Dehn-Sommerville relations, so does the $h$-vector of  interval subdivision $\Int(\Delta)$.
 Moreover, we show some simple results on the properties of  the $f$- and $h$-vector transformation matrices. In the sequel, we show that these transformation matrices are diagonalizable and similar.  In \cite{brenti2008f}, the authors studied  the limiting behavior of roots of the $h$-polynomials of successive barycentric subdivision of $\De$ and proved that these roots are depending only on the dimension of $\De$. Later, in \cite{delucchi2012face}, the authors improved and generalized this result. We also discuss the limiting behavior of roots of the $f$-polynomials of successive interval subdivision of $\De$ and proved the similar result for successive interval subdivisions.
 Section $4$ is devoted to the proof of  our main Theorem \ref{h-poly}. It states that if $\Delta$ has a non-negative
$h$-vector then the $h$-polynomial of its interval subdivision has
only real zeros. Additionally, we  prove that the refined $j$-Eulerian
polynomials of type $B$ are real-rooted. As a
consequence of the Theorem \ref{h-poly}, we derive the
Charney-Davis conjecture for the interval subdivision of a  simplicial complex
$\Delta$ with a non-negative reciprocal $h$-vector in Corollary \ref{Chraney}.

\section{The $f$-vector Transformation}
Throughout this article, $\Delta$ represents a $(d-1)$-dimensional
simplicial complex on the ground set $V=[n]$. In this section, we
will describe the transformation  sending the $f$-vector of
$\Delta$ to the  $f$-vector of  $\Int(\Delta)$. Recall that
the vector $f(\Delta):=(f_{-1}(\Delta),f_0({\Delta}),\ldots,f_{d-1}({\Delta}))$, where $f_i(\Delta)$ is the number of
$i$-dimensional faces of
$\Delta$ is  called the \textit{$f$-vector} of $\Delta$ with $f_{-1}(\Delta)=1$ (for $\dim \emptyset=-1$).  \\
By the definition of $\Int(\Delta)$, an $l$-dimensional face in
$\Int(\Delta)$ is  a chain
$$[A_0,B_0]\subset [A_1,B_1]\subset \cdots \subset [A_l,B_l]$$ of
intervals of length $l$ in $I(\Delta\setminus \{\emptyset\})$. As a warm-up we start with a description of $f_0(\Int(\Delta))$.
\subsection*{$f_0({\Int(\Delta)})$:}
$f_0({\Int(\De)})$ is the  number of  intervals
$[A,B]$, where $A\subseteq B$ for all $A,B\in \Delta\setminus
\{\emptyset\}$.  For any $B\in \Delta\setminus \{\emptyset\}$, all distinct
 subsets of $B$ (excluding $\emptyset$) give rise to
distinct intervals terminating at $B$.  For  fixed $B\in \Delta$, the number of intervals of the form $[A,B]$  is $2^{|B|}-1$.  Since there are $f_{l-1}(\Delta)$ choices for $B$ with $|B|=l$, the number
of all possible  intervals in $I(\Delta\setminus \emptyset)$ will be

$$(2^0-1)f_{-1}(\Delta)+(2^1-1)f_0(\Delta) + (2^2-1)f_1({\Delta}) + (2^3-1)f_2({\Delta})+ \cdots +(2^{d}-1)f_{d-1}({\Delta}) .$$
Thus, \begin{equation}\label{0}f_0({\Int(\Delta)})=
\sum_{k=0}^d (2^{k}-1)f_{k-1}({\Delta}).\end{equation}
Now we turn to the description of $f_k({\Int(\Delta)})$ in general.
\subsection*{$f_k({\Int(\Delta)})$:} To compute $f_k({{\Int(\Delta)}})$, for $k\ge0$, let us introduce
some notations. It is easily seen that the number of chains of length $k$ terminating at
$[A,B]$ only depends on $\alpha = |B\setminus A|$.
 Let $Q_k^{\alpha}$ denote the number of  chains of
intervals of length $k$ terminating at some fixed interval $[A , B]$,
where $\alpha =|B\setminus A|$. By definition, $Q_k^{\alpha}=0$ for $\alpha < k$ and $Q_0^{\alpha}=1$ for all $\alpha$.\\
We group the $k$-chains in $ I(\Delta\setminus \emptyset)$ according to the top element $[A,B]$
of the chain. For a fixed $[A,B]$, we have $Q_k^{l-t}$ chains of length $k$ terminating in
$[A,B]$, where $t = |A|$ and $l =  |B|$. There are $f_{l-1}(\Delta)$ choices for $B$ with
$|B| = l$ and for a fixed $B$ we have ${l\choose t}$ subsets $A\subseteq B$
with  $|A| = t$.
Hence, we have

\begin{equation}\label{1}
  f_k({\Int(\Delta)})=\sum_{l=0}^{d}\big{[}\sum_{t=1}^{l}  {{l}\choose{t}}Q_k^{l-t}\big{]}f_{l-1}({\Delta}).
  \end{equation}
In the next lemma, we give an expression for  $Q_k^{\alpha}$.
\begin{lema}
  The formula for $Q_k^{\alpha}$ is given as
\begin{equation}\label{ind}
  Q_k^{\alpha}= \sum_{i=0}^{k}
(-1)^i{{k}\choose{i}}(1+2(k-i))^{\alpha}.
\end{equation}
\end{lema}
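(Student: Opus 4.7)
The idea is to parametrize every subinterval $[A',B']\subseteq[A,B]$ of $I(\Delta\setminus\{\emptyset\})$ by a $\{0,1,2\}$-valued function on $S := B\setminus A$, and then count the strict chains of such functions by inclusion--exclusion.

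First I set up the encoding. Every interval with $A\subseteq A'\subseteq B'\subseteq B$ is determined by the disjoint sets $X=A'\setminus A$ and $Y=B\setminus B'$ in $S$, and conversely every disjoint pair arises from a unique such $[A',B']$. Assign to it the function $\phi_{[A',B']}\colon S\to V:=\{0,1,2\}$ taking the value $1$ on $X$, the value $2$ on $Y$, and $0$ elsewhere. Checking the condition $A''\subseteq A'\subseteq B'\subseteq B''$ directly shows that the interval-poset order $[A',B']\subseteq[A'',B'']$ corresponds to the coordinatewise order $\phi_{[A'',B'']}\leq\phi_{[A',B']}$ on $V^{S}$, where $V$ carries the partial order with $0<1$, $0<2$, and $1,2$ incomparable. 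In particular $[A,B]$ itself is sent to the constant function $\mathbf{0}$, which is the minimum of $V^{S}$, so after reversing the indexing a chain of length $k$ terminating at $[A,B]$ is in bijection with a strict chain $\mathbf{0}=\phi_{0}<\phi_{1}<\cdots<\phi_{k}$ of length $k$ in $V^{S}$.

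Next I count these strict chains via coordinate trajectories. Such a chain is determined by the collection of sequences $\bigl(\phi_{0}(s),\phi_{1}(s),\dots,\phi_{k}(s)\bigr)_{s\in S}$. For each $s\in S$ the coordinate sequence is weakly increasing in $V$ starting at $0$; since $1$ and $2$ are maximal in $V$, it is either constantly $0$, or jumps to $1$ at some step $j\in\{1,\dots,k\}$ and stays there, or jumps to $2$ at some step $j$ and stays there---giving exactly $1+2k$ trajectories per coordinate, hence $(1+2k)^{\alpha}$ unrestricted collections in total. The strictness condition amounts to requiring that at every step $j\in\{1,\dots,k\}$ at least one coordinate actually jumps. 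For a subset $I\subseteq\{1,\dots,k\}$ of ``forbidden'' jump-times, the collections in which no coordinate jumps at any step in $I$ have $1+2(k-|I|)$ trajectories per coordinate, so there are $(1+2(k-|I|))^{\alpha}$ of them. Inclusion--exclusion over $I$ then yields
\[
Q_{k}^{\alpha} \;=\; \sum_{i=0}^{k} (-1)^{i}\binom{k}{i}(1+2(k-i))^{\alpha},
\]
and the base cases $Q_{0}^{\alpha}=1$ and $Q_{k}^{\alpha}=0$ for $\alpha<k$ fall out automatically (the latter from the vanishing of $k$-th finite differences of polynomials of degree less than $k$).

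The only delicate point is the first step: one has to match the direction of the interval-poset order with the coordinatewise $V$-order on the encoding, since the correspondence reverses the direction of inclusion. Once this dictionary is fixed, the trajectory count together with the standard inclusion--exclusion argument finishes the proof.
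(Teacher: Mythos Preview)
Your argument is correct and takes a genuinely different route from the paper. The paper proceeds by induction on $k$: it derives the recurrence
\[
Q_k^{\alpha}=\sum_{i=0}^{\alpha}\binom{\alpha}{i}\sum_{j=0}^{\alpha-i}\binom{\alpha-i}{j}Q_{k-1}^{j}-Q_{k-1}^{\alpha}
\]
by summing over the penultimate interval of the chain, plugs in the closed form for $Q_{k-1}^{\bullet}$, and simplifies via binomial identities and Pascal's rule. Your proof instead identifies the interval poset below $[A,B]$ with the product poset $V^{S}$ (where $V=\{0<1,\;0<2\}$) and counts strict $k$-chains from the minimum by coordinate trajectories and inclusion--exclusion over the set of ``empty'' steps. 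Your bijection is order-reversing, as you note, but that is harmless since you only need to count chains of a given length starting at the image of $[A,B]$.

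The trade-off: the paper's induction is self-contained algebra but presupposes the formula and gives no intuition for the shape $(1+2(k-i))^{\alpha}$. Your approach explains that shape structurally (it is the standard inclusion--exclusion count for weakly increasing sequences with no repeated step, with $1+2m$ trajectories available once $m$ steps are allowed), and it delivers the closed form in one stroke rather than via a recurrence. Both are short; yours is more illuminating, while the paper's stays closer to first principles and avoids introducing the auxiliary poset $V^{S}$.
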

\begin{proof}
 We  will prove \eqref{ind}  by induction on
$k$. The case  $k=0$ follows from the definition.  Now,  suppose that \eqref{ind} is true for $k-1$. To compute $Q_k^{\alpha}$, we intend to count all
$k$-chains of intervals terminating at  some fixed interval $[A,B]$ with $|B\setminus A|=\alpha$. Let $B=A\cup\{a_1,a_2,\ldots, a_{\alpha}\}$. The
intervals strictly contained in $[A,B]$ are of the form $[A\cup\{a_{i_1}, \ldots, a_{i_t}\} ,A\cup\{a_{i_1},\ldots,
a_{i_{t+s}}\}]$ unless $t=0$ and $s=\alpha$. There are ${\alpha \choose t}{\alpha-t \choose s}$ choices for intervals of the form $[A\cup\{a_{i_1}, \ldots, a_{i_t}\} ,A\cup\{a_{i_1},\ldots,
a_{i_{t+s}}\}]$ contained in $[A,B]$, and  the number of  all  chains of length $k-1$
terminating at $[A\cup\{a_{i_1},\ldots, a_{i_t}\}
, A\cup\{a_{i_1},\ldots,  a_{i_{t+s}}\}]$ is $ Q_{k-1}^{s}$.  Hence for fixed $\alpha$ and $k$, we have the following recurrence relation
$$Q_k^{\alpha}=\sum_{i=0}^{\alpha} {\alpha\choose i}\big{[}\sum_{j=0}^{\alpha-i}{\alpha-i\choose j} Q_{k-1}^j\big{]}-Q_{k-1}^{\alpha}.$$
Since \eqref{ind} is true for $k-1$, so substitute the formula of $Q^{\alpha}_{k-1}$ in the above expression
$$
  Q_k^{\alpha}=\sum_{i=0}^{\alpha} {\alpha\choose i}\big{[}\sum_{j=0}^{\alpha-i}{\alpha-i\choose j} \sum_{m=0}^{k-1}
(-1)^m{{k-1}\choose{m}}(1+2(k-1-m))^{j} \big{]}$$
$$- \sum_{m=0}^{k-1}
(-1)^m{{k-1}\choose{m}}(1+2(k-1-m))^{\alpha}.
$$

Using the  binomial formula twice  (first  summing over $j$ and then over $i$), we have
$$Q_k^{\alpha}=\sum_{m=0}^{k-1}(-1)^m {{k-1}\choose m}\big[(1+2k-2m)^{\alpha}-(1+2k-2(m+1))^{\alpha}\big]$$
Now, using the identity ${{k-1}\choose{m}}+{{k-1}\choose {m-1}}={k\choose m}$, we get
$$Q_k^{\alpha}=(1+2k)^{\alpha}+\sum_{m=1}^{k-1}(-1)^m{{k}\choose m}(1+2k-2m)^{\alpha}+(-1)^k$$
which gives the required form.
 \end{proof}

Thus,  we have the $f$-vector transformation as follows.
\begin{teo}\label{f} Let $\Delta$ be a $(d-1)$-dimensional simplicial complex. Then
  \begin{equation}\label{2}
  f_k({\Int(\Delta)})=\sum_{l=0}^{d}\sum_{i=0}^{k} (-1)^i {{k}\choose{i}}\big{[}(2+2k-2i)^{l}-(1+2k-2i)^{l}\big{]}f_{l-1}(\Delta).
  \end{equation}
  for $0\leq k\leq d-1$ and  $f_{-1}({\Int(\Delta)})=f_{-1}(\Delta)=1$.
\end{teo}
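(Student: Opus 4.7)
The plan is to derive the stated formula by directly substituting the closed form of $Q_k^\alpha$ from the preceding lemma into the expression \eqref{1} for $f_k(\Int(\Delta))$, then simplifying the inner sum by the binomial theorem.

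More precisely, I would start from
\begin{equation*}
f_k(\Int(\Delta))=\sum_{l=0}^{d}\Bigl[\sum_{t=1}^{l} \binom{l}{t} Q_k^{l-t}\Bigr]f_{l-1}(\Delta),
\end{equation*}
and replace $Q_k^{l-t}$ with $\sum_{i=0}^k(-1)^i\binom{k}{i}(1+2k-2i)^{l-t}$. After interchanging the order of summation between the index $t$ and the index $i$, the bracketed quantity becomes
\begin{equation*}
\sum_{i=0}^{k}(-1)^i \binom{k}{i}\sum_{t=1}^{l}\binom{l}{t}(1+2k-2i)^{l-t}.
\end{equation*}

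The key simplification is recognizing that the inner sum over $t$ is a binomial expansion with the $t=0$ term missing. By the binomial theorem,
\begin{equation*}
\sum_{t=0}^{l}\binom{l}{t}(1+2k-2i)^{l-t}\cdot 1^{t} = \bigl((1+2k-2i)+1\bigr)^{l} = (2+2k-2i)^{l},
\end{equation*}
so subtracting the missing $t=0$ term, which equals $(1+2k-2i)^{l}$, gives $(2+2k-2i)^{l}-(1+2k-2i)^{l}$. Substituting this back yields the desired formula. Finally, the base case $f_{-1}(\Int(\Delta))=1$ follows from the fact that the empty chain is the unique $(-1)$-dimensional face of any simplicial complex.

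This proof is essentially a bookkeeping calculation, and the only mild subtlety is the handling of the $t=0$ term, which must be excluded because the intervals in $I(\Delta\setminus\{\emptyset\})$ require $A\neq\emptyset$; this is exactly what forces the subtraction $(2+2k-2i)^l-(1+2k-2i)^l$ rather than the single binomial power. Once the lemma is in hand, no combinatorial argument remains: the theorem is a direct algebraic consequence.
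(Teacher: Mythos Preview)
Your proposal is correct and follows essentially the same route as the paper: substitute the closed form of $Q_k^{l-t}$ into \eqref{1}, swap the sums over $t$ and $i$, and evaluate the inner sum over $t$ via the binomial theorem with the $t=0$ term removed. Your write-up is in fact slightly more explicit than the paper's, which simply says ``summing over $t$ and using the binomial formula'' at the last step.
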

\begin{proof}  Using the expressions \eqref{1} and \eqref{ind}, we have the following
\begin{align*}
  f_k({\Int(\Delta)})=&\sum_{l=0}^{d}\big{[}\sum_{t=1}^{l}  {{l}\choose{t}}\sum_{i=0}^{k}
(-1)^i{{k}\choose{i}}(1+2(k-i))^{l-t}\big{]}f_{l-1}({\Delta})\\
  =& \sum_{l=0}^{d}\big{[}\sum_{i=0}^{k}
(-1)^i{{k}\choose{i}}\sum_{t=1}^{l}  {{l}\choose{t}}(1+2(k-i))^{l-t}\big{]}f_{l-1}({\Delta})
  \end{align*}
Summing over $t$ and using the binomial formula, we get the required expression \eqref{2}.
\end{proof}
\begin{remark}
The formula \eqref{2} can be rewritten(using the binomial formula) as
 $$f_k({\Int(\Delta)})=\sum_{l=0}^{d}\sum_{i=0}^{k} (-1)^i {{k}\choose{i}}\big{[}\sum_{j=0}^l{{l}\choose {j}}(2^l-2^j)(k-i)^j\big{]}f_{l-1}(\Delta).$$
Using the explicit formula for  Stirling numbers $S(j,k)$ of second kind  $$k!S(j,k)=\sum_{i=0}^{k}(-1)^i{{k}\choose {i}}(k-i)^j,$$
 we get

\begin{equation}\label{3}
  f_k({\Int(\Delta)})=\sum_{l=0}^{d}\sum_{j=0}^{l}  {{l}\choose{j}}k!S(j,k)\big{(}2^l-2^j\big{)}f_{l-1}(\Delta).
  \end{equation}

\end{remark}
The transformation of the $f$-vector of
$\Delta$ to the $f$-vector of interval subdivision $\Int(\Delta)$ is given by the matrix:
 $$\mathcal{F}_d=[b_{k,l}]_{0\leq k,l\leq d},$$  where
 $$b_{0,l}=\left\{
               \begin{array}{ll}
                 1, & \hbox{$l=0$;} \\
                 0, & {l>0.}
               \end{array}
             \right.
$$ and for $1\leq k\leq d$, we have
\begin{equation}\label{matrix}
  b_{k,l}=\sum_{j=0}^{k-1}(-1)^j{k-1 \choose
j}[(2k-2j)^l-(2k-2j-1)^l]
\end{equation}
Here, we include an example to demonstrate the above computed
transformation.
\begin{figure}[h]

\centering
\begin{tikzpicture}[scale=0.6]
\filldraw[color=black, fill=gray!30,ultra thick]  (-3,0) -- (13,0) -- (5,13)  -- cycle;
\filldraw[black] (-3,0) circle (8pt) node[anchor=north]{};
 \fill[black,font=\large]
                    (-3,-0.5) node [below] {$[1,1]$};
\filldraw[black] (13,0) circle (8pt) node[anchor=west]{};
\fill[black,font=\large]
                    (13,-0.5) node [below] {$[3,3]$};
\filldraw[black] (5,13) circle (8pt) node[anchor=west]{};
\fill[black,font=\large]
                    (5,13.5) node [above] {$[2,2]$};
\filldraw[black] (1,0) circle (8pt) node[anchor=west]{};
\fill[black,font=\large]
                    (1,-0.5) node [below] {$[1,13]$};
\filldraw[black] (5,0) circle (8pt) node[anchor=west]{};
\fill[black,font=\large]
                    (5,-0.5) node [below] {$[13,13]$};
\filldraw[black] (9,0) circle (8pt) node[anchor=west]{};
\fill[black,font=\large]
                    (9,-0.5) node [below] {$[3,13]$};
\filldraw[black] (-1,3.3) circle (8pt) node[anchor=west]{};
\fill[black,font=\large]
                    (-1.3,3.3) node [left] {$[1,12]$};
\filldraw[black] (1,6.6) circle (8pt) node[anchor=west]{};
\fill[black,font=\large]
                    (0.7,6.6) node [left] {$[12,12]$};
\filldraw[black] (3,9.9) circle (8pt) node[anchor=west]{};
\fill[black,font=\large]
                    (2.7,9.9) node [left] {$[2,12]$};
\filldraw[black] (11,3.3) circle (8pt) node[anchor=west]{};
\fill[black]
                    (11.3,3.3) node [right] {$[3,23]$};
\filldraw[black] (9,6.6) circle (8pt) node[anchor=west]{};
\fill[black,font=\large]
                    (9.3,6.6) node [right] {$[23,23]$};
\filldraw[black] (7,9.9) circle (8pt) node[anchor=west]{};
\fill[black,font=\large]
                    (7.3,9.9) node [right] {$[2,23]$};
\filldraw[black] (5,4.5) circle (8pt) node[anchor=west]{};
\fill[black,font=\large]
                    (5,4.8) node [above] {$[123,123]$};
\filldraw[black] (1,2.25) circle (8pt) node[anchor=west]{};
\fill[black,font=\large]
                    (1.1,2.55) node [above] {$[1,123]$};
\filldraw[black] (9,2.25) circle (8pt) node[anchor=west]{};
\fill[black,font=\large]
                    (9,2.55) node [above] {$[3,123]$};
\filldraw[black] (5,2.25) circle (8pt) node[anchor=west]{};
\fill[black,font=\large]
                    (5,2.55) node [above] {$[13,123]$};
\filldraw[black] (5,8.75) circle (8pt) node[anchor=west]{};

\filldraw[black] (3,5.5) circle (8pt) node[anchor=west]{};
\fill[black,font=\large]
                    (3,5.8) node [above] {$[12,123]$};
\filldraw[black] (7,5.5) circle (8pt) node[anchor=west]{};
\fill[black,font=\large]
                    (7,5.8) node [above] {$[13,123]$};
\draw[black, ultra thick] (-3,0) -- (9,6.6);
\draw[black, ultra thick] (5,13) -- (5,0);
\draw[black, ultra thick] (13,0) -- (1,6.6);
\draw[black,ultra thick] (1,6.6) -- (1,0);
\draw[black, ultra thick] (9,6.6) -- (9,0);
\draw[black,ultra thick] (-1,3.3) -- (5,0);
\draw[black,ultra thick] (11,3.3) -- (5,0);
\draw[black, ultra thick] (1,2.25) -- (9,2.25);
\draw[black,  ultra thick] (1,2.25) -- (5,8.75);
\draw[black, ultra thick] (9,2.25) -- (5,8.75);
\draw[black, ultra thick] (3,9.9) -- (5,8.75);
\draw[black, ultra thick] (7,9.9) -- (5,8.75);
\draw[black,ultra thick] (1,6.6) -- (5,8.75);
\draw[black, ultra thick] (9,6.6) -- (5,8.75);

\end{tikzpicture}

\caption{Interval Subdivision of the $2$-Simplex}
\end{figure}
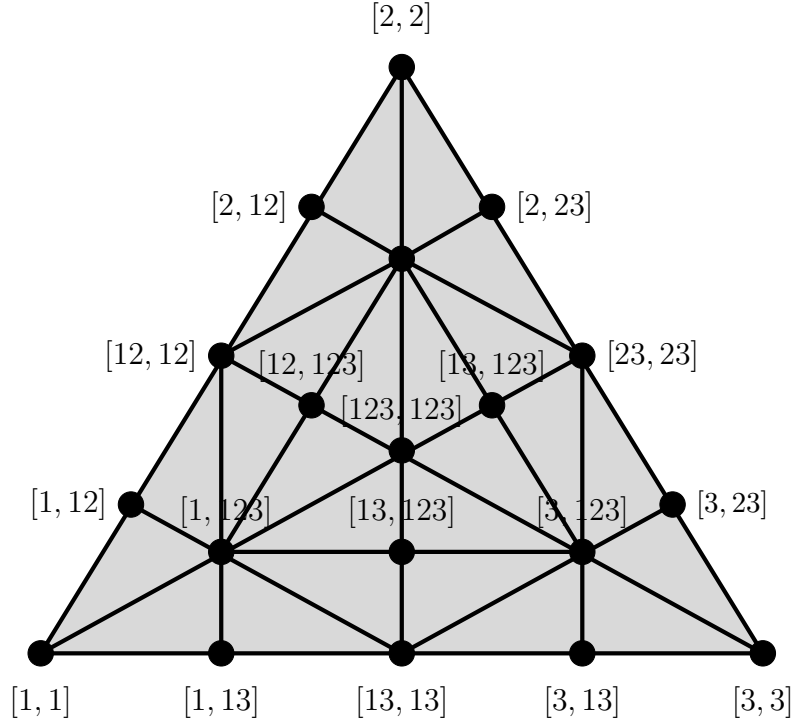

\begin{ej}Let $\Delta$ be a $2$-simplex on the ground set
$\{1,2,3\}$.  Then  $f(\Delta)=(3,3,1)$. The ground set of
$\Int(\Delta)$ is the set of all possible intervals in
$\Delta\setminus \{\emptyset\}$, see Figure 1. Therefore,
$f_0({\Int(\Delta)})=\sum_{l=0}^2(2^{l}-1)f_{l-1}(\Delta)=19, f_1({\Int(\Delta)})=
\sum_{l=0}^{2}\sum_{i=0}^{1} (-1)^i
{{1}\choose{i}}\big{[}(4-2i)^{l}-(3-2i)^{l}\big{]}f_{l-1}(\Delta)=42$ and
$f_2({\Int(\Delta)})= \sum_{l=0}^{2}\sum_{i=0}^{2} (-1)^i
{{2}\choose{i}}\big{[}(6-2i)^{l}-(5-2i)^{l}\big{]}f_{l-1}(\Delta)=24$.
\end{ej}

\section{The $h$-vector Transformation}

In this section, we represent the $h$-vector of  an interval subdivision  in term of the $h$-vector of the given simplicial complex.
Recall that the \textit{$h$-vector} $h(\Delta):=(h_0(\Delta),\ldots, h_d(\Delta))$ of $(d-1)$-simplicial complex $\Delta$  is defined in terms of the $f$-vector as
$$h_k(\Delta):=\sum_{i=0}^{k}(-1)^{k-i}{d-i \choose k-i}f_{i-1}(\Delta).$$ The \textit{$h$-polynomial} of $\Delta$ is defined as $$h(\Delta,x):=\sum_{i=0}^{d}h_i(\Delta)x^i.$$
We need to recall some notions to give the combinatorial description of the entries of  $h$-vector transformation matrix.

\subsection*{Signed Permutation Group $B_d$.} We  present here some definitions and notations  for the classical Weyl group of type $B$ (also known as the hyperoctahedral group or the signed
permutations group), denoted as $B_d$.  It is the group consisting of
all the bijections $\sigma$ of the set $ \{\pm 1,\ldots,\pm d\}$ onto itself such that $\sigma_{-i}= -\sigma_{i}$ for all $i\in \{\pm 1,\ldots, \pm d\}$.  The group $B_d$ can be viewed as a subgroup of $A_{2d}$ and the element $\sigma\in B_d$ is completely determined by $\sigma_1,\ldots, \sigma_d$. In one-line notation, we write $\sigma=\sigma_1\ldots \sigma_d$.
 For $\sigma\in B_d$, the \textit{descent set} is defined as  $$\textrm{Des}_B(\sigma):=\{i\in [0,d-1] : \sigma_i>\sigma_{i+1}\},$$ where $\sigma_0=0$ and the \textit{descent number} of type $B$ is defined  as $\textrm{des}_B(\sigma):=|\hbox{Des}_B(\sigma)|$.\\
Let  $$B_{d,j}:=\{\sigma\in B_d : \sigma_1=j\}$$ and define the $j$-Eulerian polynomial of type $B$
 as  \begin{equation}\label{B-Euler}
    B_{d,j}(t):=\sum_{\sigma\in B_{d,j}}t^{\des_B(\sigma)}=\sum_{k=0}^{d}B(d,j,k)t^k,
  \end{equation}
   where $B(d,j,k)$
 is the  number of elements in $B_{d,j}$ with exactly $k$ descents.\\

Let $$B_d^{+}:=\{\sigma\in B_d : \sigma_{d}>0\}$$ and
$$B_{d,j}^{+}:=\{\sigma\in B_d^+ : \sigma_1=j\}.$$
Similarly, for the other half hyperoctahedral group $B^-_d$, define
 $$B_d^{-}:=\{\sigma\in B_d : \sigma_{d}<0\}$$ and
$$B_{d,j}^{-}:=\{\sigma\in B_d^- : \sigma_1=j\}.$$
  Let us define the
  \textbf{\textit{$j$-Eulerian polynomials of type $B^+$}} by
  \begin{equation}\label{B+}
    B^+_{d,j}(t):=\sum_{\sigma\in B^+_{d,j}}t^{\des_B(\sigma)}=\sum_{k=0}^{d-1}B^+(d,j,k)t^k,
  \end{equation}
  where $B^+(d,j,k)$
 is the  number of elements in $B_{d,j}^{+}$ with exactly $k$ descents.  Similarly, define the
  \textbf{\textit{$j$-Eulerian polynomials of type $B^-$}} by
  \begin{equation}\label{B-}
    B^-_{d,j}(t):=\sum_{\sigma\in B^-_{d,j}}t^{\des_B(\sigma)}=\sum_{k=1}^{d}B^-(d,j,k)t^k,
  \end{equation}
  where $B^-(d,j,k)$
 is the number of elements in $B_{d,j}^{-}$ with exactly $k$ descents.\\
  Since $B_{d,j}=B^+_{d,j}\cup B^-_{d,j}$ so the $j$-Eulerian polynomial $B_{d,j}$ of type $B$ is $$B_{d,j}(t)=B^+_{d,j}(t)+B^-_{d,j}(t).$$
Here, we list $B^+_{d,j}(t)$  for $ d= 4$  and $1\leq s\leq 4$:\\
\begin{align*}
  B^+_{4,1}(t)= & 1+16t+7t^2  \\
  B^+_{4,2}(t)= & 14t+10t^2\\
B^+_{4,3}(t)= & 10t+14t^2\\
B^+_{4, 4}(t)= & 7t+16t^2+t^3
\end{align*}

\subsection*{The $h$-Vector of Interval Subdivision.} Let $\mathcal{H}_d$ be the transformation matrix from the $f$-vector to the $h$-vector, then \begin{equation*}\mathcal{H}_d=[h_{i,j}]_{0\leq i,j\leq d}=[(-1)^{i+j}{d-i\choose j-i}]_{0\leq i,j\leq d}\end{equation*} with its inverse transformation matrix
\begin{equation*}\mathcal{H}^{-1}_d=\big[{d-i\choose j-i}\big]_{0\leq i,j\leq d}\end{equation*}

Thus, $$h({\Int(\Delta)})=\mathcal{H}_d\mathcal{F}_d\mathcal{H}^{-1}_d h(\Delta)$$
Set it by $$\mathcal{R}_d=\mathcal{H}_d\mathcal{F}_d\mathcal{H}^{-1}_d.$$  For example,
$$\mathcal{R}_4=\left(
              \begin{array}{ccccc}
                1  & 0  &  0 & 0  & 0\\
                61 & 46 & 32 & 22 & 15\\
                115& 124& 128& 124 & 115\\
                15 & 22 & 32 & 46 & 61\\
                0  & 0  & 0  & 0  & 1\\
              \end{array}
            \right)
$$
In the following theorem, we show that this transformation matrix possesses a nice combinatorial description.
\begin{teo}\label{h}
The entries of  the matrix $\mathcal{R}_d$ are given as:
  $$\mathcal{R}_d=[B^+(d+1,s+1,r)]_{0\leq s,r\leq d}.$$
Thus, the $h$-vector of $\Int(\Delta)$ will be
$$h_r({\Int(\Delta)})=\sum_{s=0}^{d}B^+(d+1,s+1,r)h_s({\Delta}).$$
\end{teo}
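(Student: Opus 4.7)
The plan is to verify the polynomial identity $P_s(t) = B^+_{d+1,s+1}(t)$ for every $0 \le s \le d$, where $P_s(t) := \sum_{r=0}^d (\mathcal{R}_d)_{r,s}\, t^r$ is the generating polynomial of the $s$-th column of $\mathcal{R}_d$. The second displayed formula of the theorem is an immediate consequence of $h(\Int(\Delta)) = \mathcal{R}_d\, h(\Delta)$.

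First, I would compute $P_s(t)$ directly from the factorization $\mathcal{R}_d = \mathcal{H}_d \mathcal{F}_d \mathcal{H}_d^{-1}$. The elementary identity $\sum_{r \ge 0}(-1)^{r-i}\binom{d-i}{r-i}\, t^r = t^i(1-t)^{d-i}$ collapses the sum over rows of $\mathcal{H}_d$; after substituting the explicit formula (\ref{matrix}) for $b_{i,j}$ together with $(\mathcal{H}_d^{-1})_{j,s} = \binom{d-j}{s-j}$, the polynomial $P_s(t)$ becomes a sum of binomial-weighted differences of the form $(2k)^l - (2k-1)^l$. Interchanging the order of summation and applying the binomial theorem over $l$ should compress $P_s(t)$ into a single generating function parameterized by $s$ and $d$.

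Second, I would derive a companion closed form for $B^+_{d+1,s+1}(t)$. The well-known type-$B$ identity $\sum_{k \ge 0}(2k+1)^d t^k = B_d(t)/(1-t)^{d+1}$ admits a refinement obtained by sorting the signed permutations $\sigma \in B_{d+1}$ with $\sigma_1 = s+1$ and $\sigma_{d+1} > 0$ by their descent sets, in direct analogy with the treatment of the type-$A$ refined Eulerian polynomials in Stanley \cite{stanley1986enumerative} and Brenti--Welker \cite{brenti2008f}. This produces an identity expressing $B^+_{d+1,s+1}(t)/(1-t)^{d+1}$ as a power series whose coefficients are evaluation polynomials in $k$ depending on the prescribed value $s+1$ of the first entry and the positivity of $\sigma_{d+1}$.

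The main obstacle is matching the two generating functions of the previous paragraphs. The cleanest route, in my view, is a direct combinatorial bijection: cf.\ the derivation of (\ref{ind}), an $r$-chain of intervals $[A_0,B_0] \subset \cdots \subset [A_r,B_r]$ with top interval $[A,B]$ is determined by recording, for each element $v \in B$, a time-stamp encoding when $v$ enters $B_i$ together with, in the case $v \in A$, when it leaves $A_i$. Assigning a sign to each $v$ according to whether it lies in $A$ or only in $B \setminus A$ should produce a signed permutation in $B_{d+1}$; I expect this correspondence, suitably calibrated by the value of $\sigma_1$ and by the sign of $\sigma_{d+1}$, to give a descent-preserving bijection onto $B^+_{d+1,s+1}$. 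Verifying that the descent statistic on the two sides matches, and that the signed permutations obtained are exactly those in $B^+_{d+1,s+1}$, will be the most delicate part of the argument.
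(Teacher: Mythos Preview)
Your outline has the right overall shape but contains a genuine gap in the third step, where the combinatorics and the linear algebra are not properly aligned. The objects you propose to biject --- $r$-chains of intervals $[A_0,B_0]\subset\cdots\subset[A_r,B_r]$ with top element $[A,B]$ --- are what the entries of $\mathcal{F}_d$ (or, after summing over top intervals with $|B|=l$ weighted by $\binom{d-s}{d-l}$, the entries of $C=\mathcal{F}_d\mathcal{H}_d^{-1}$) count. They are \emph{not} counted by the entries of $\mathcal{R}_d$. Since $\mathcal{R}_d=\mathcal{H}_d C$, passing from $C$ to $\mathcal{R}_d$ involves an alternating inclusion--exclusion, so a ``descent-preserving bijection'' from interval chains onto $B^+_{d+1,s+1}$ cannot exist at the level of $\mathcal{R}_d$: the number of $r$-chains is strictly larger than $B^+(d+1,s+1,r)$ in general. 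Your time-stamp idea may well produce a nice encoding of interval chains by signed words, but it will land in a set larger than $B^+_{d+1,s+1}$, and the descent count will not match on the nose.

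The paper avoids this obstacle by never seeking a bijection for $\mathcal{R}_d$ itself. Instead it gives a combinatorial model for the intermediate matrix $C=\mathcal{F}_d\mathcal{H}_d^{-1}$: the entry $C_{r,s}$ counts ordered set partitions $A_0|A_1|\cdots|A_r$ of $\{\pm1,\ldots,\pm(d+1)\}$ (one sign per index) with $\min A_0=s+1$ and $\max A_r>0$; this is established inductively via the recurrence $\sum_{i\ge1}2^i\binom{l}{i}b_{r,l-i}=b_{r+1,l}$ (Lemma~\ref{b}). Sorting each block increasingly and erasing bars gives a \emph{surjection} (not a bijection) onto $B^+_{d+1,s+1}$, under which each $\sigma$ has exactly $\binom{d-\des_B(\sigma)}{r-\des_B(\sigma)}$ preimages of rank $r$. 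Summing yields
\[
C_s(t)=\sum_{\sigma\in B^+_{d+1,s+1}} t^{\des_B(\sigma)}(1+t)^{d-\des_B(\sigma)}=(1+t)^d\,B^+_{d+1,s+1}\!\left(\tfrac{t}{1+t}\right),
\]
and left-multiplication by $\mathcal{H}_d$ is precisely the substitution $t\mapsto t/(1+t)$ followed by clearing the $(1+t)^d$, giving $P_s(t)=B^+_{d+1,s+1}(t)$. If you want to salvage your approach, redirect your bijection to model $C_{r,s}$ rather than $(\mathcal{R}_d)_{r,s}$, accept that it is many-to-one onto permutations, and let the change of basis $\mathcal{H}_d$ do the inclusion--exclusion for you; your first and second steps then become unnecessary.
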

To prove the above theorem, we need the following lemma regarding the recurrence relation of the entries $b_{r,l}$ of the matrix $\mathcal{F}_d$.
\begin{lema}\label{b}
For $1\leq r\leq d-1$ and $1\leq l\leq d$,
  $$
\sum_{i=1}^{l}2^i{l\choose i}b_{r,l-i}=b_{r+1,l}.$$
\end{lema}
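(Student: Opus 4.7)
The plan is to reduce everything to the explicit formula \eqref{matrix} for the entries $b_{r,l}$ and massage the resulting sums with the binomial theorem and Pascal's identity.

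First I would observe that the inner sum over $i$ has a clean closed form. Since $\sum_{i=0}^{l} 2^{i}\binom{l}{i} x^{l-i} = (x+2)^{l}$ by the binomial formula, separating off the $i=0$ term gives the identity
$$\sum_{i=1}^{l} 2^{i}\binom{l}{i} x^{l-i} = (x+2)^{l} - x^{l}.$$
Applying this with $x = 2r-2j$ and with $x = 2r-2j-1$ inside the expression
$$b_{r,l-i}=\sum_{j=0}^{r-1}(-1)^{j}\binom{r-1}{j}\bigl[(2r-2j)^{l-i}-(2r-2j-1)^{l-i}\bigr]$$
and interchanging the order of summation yields
$$\sum_{i=1}^{l}2^{i}\binom{l}{i}b_{r,l-i}=\sum_{j=0}^{r-1}(-1)^{j}\binom{r-1}{j}\bigl[P(r+1-j)-P(r-j)\bigr],$$
where I set $P(m):=(2m)^{l}-(2m-1)^{l}$ for brevity.

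Next I would unfold the target $b_{r+1,l}=\sum_{j=0}^{r}(-1)^{j}\binom{r}{j}P(r+1-j)$ via Pascal's identity $\binom{r}{j}=\binom{r-1}{j}+\binom{r-1}{j-1}$, split into two sums, and in the second sum re-index $j\mapsto j+1$. The two sums combine into
$$b_{r+1,l}=\sum_{j=0}^{r-1}(-1)^{j}\binom{r-1}{j}\bigl[P(r+1-j)-P(r-j)\bigr],$$
with the boundary terms vanishing because $\binom{r-1}{r}=0$ and the reindexing ranges match. This is exactly the expression obtained on the left-hand side in the previous step, so the lemma follows.

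I do not anticipate a real obstacle here: the argument is a two-step algebraic identity, with the binomial-theorem telescoping on one side and a Pascal-identity splitting on the other, meeting in the middle at the common expression involving the finite difference $P(r+1-j)-P(r-j)$. The only place to be careful is tracking the index bounds when re-indexing and verifying that the artificial boundary terms are indeed zero.
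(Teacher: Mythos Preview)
Your proof is correct and follows essentially the same route as the paper: both arguments substitute the explicit formula for $b_{r,l-i}$, use the binomial theorem to collapse the sum over $i$ into the expression $(2r-2j+2)^{l}-(2r-2j+1)^{l}-(2r-2j)^{l}+(2r-2j-1)^{l}$, and then match this with $b_{r+1,l}$ via Pascal's identity. The only cosmetic difference is that the paper works forward from the left-hand side and re-sums to reach the closed form of $b_{r+1,l}$, whereas you meet in the middle by also expanding $b_{r+1,l}$ with Pascal's identity; your shorthand $P(m)$ makes the telescoping especially transparent.
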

\begin{proof}
Using \eqref{matrix}, we have
  \begin{align*} \nonumber
                                                        \nonumber\sum_{i=1}^{l}2^i{l\choose i}b_{r,l-i} = & \sum_{i=1}^{l}2^i{l\choose i}\sum_{j=0}^{r-1}(-1)^j{r-1 \choose j}[(2r-2j)^{l-i}-(2r-2j-1)^{l-i}] \\
                                                        = &\nonumber \sum_{j=0}^{r-1}(-1)^j{r-1 \choose j}\sum_{i=1}^{l}2^i{l\choose i}[(2r-2j)^{l-i}-(2r-2j-1)^{l-i}]\\
                                                         =&\nonumber   \sum_{j=0}^{r-1}(-1)^j{r-1 \choose j}[(2r-2j+2)^{l}-(2r-2j+1)^{l}-(2r-2j)^{l}+(2r-2j-1)^{l}]\\
\end{align*}
By re-summing, we get

\begin{align*}
                                                         = & (2r+2)^l-(2r+1)^l+\sum_{j=0}^{r-2}(-1)^{j+1}\big({r-1 \choose j}+ {r-1\choose
                                                         j+1}\big)[(2r-2j)^{l}\\
                                                         & -(2r-2j-1)^{l}]+(-1)^r(2^l-1)\\
                                                          = & \sum_{j=0}^{r}(-1)^j{r \choose j}\big[(2(r+1)-2j)^{l}-(2(r+1)-2j-1)^{l}\big]\\
                                                         =& b_{r+1,l}
\end{align*}
\end{proof}

\subsection*{Proof of Theorem \ref{h}}

Let $C_{r,s}$ be the $(r,s)$-entry of the matrix $C=\mathcal{F}_d\mathcal{H}^{-1}_d$. We have $$C_{r,s}=\sum_{l=0}^{d}{d-s\choose l-s}b_{r,l}=\sum_{l=0}^{d}{d-s\choose d-l}b_{r,l}$$

Let $\mathcal{C}_{r,s}$ denote the set of all set  partitions
$A=A_0|A_1|\ldots|A_r$ of rank $r$ of  $d+1$ elements  ranging from $\{\pm1,\pm
2,\ldots,\pm (d+1)\}$ for which exactly one of $\pm i$ appears in $A$ with
$\min A_0=s+1$ and $\max A_r:= \hbox{ the maximum element of } A_r>0$.  Fix $l$ and count partitions with  $d-l$ elements from  $\{s+2, \ldots, d+1\}$ in the first block $A_0$ along with $\min A_0 =s+1$. This can be done in ${{d-s} \choose {d-l}}$
ways. For $r=0$, we  have ${{d-s} \choose {d}}={{d-s} \choose
{-s}}=C_{0,s}$ such set partitions. For $r>0$, to form $A_1|\ldots |A_r$
we need to create a set partition from the remaining $l$ elements,
and this can be done in $b_{r,l}$ ways. Lets prove this claim by
induction on $r$. For $r=1$, to form $A_1$, we need to put $l$
elements from $\{\pm 1,\ldots,\pm (d+1)\}\setminus A_0$   such
that $\max A_1>0$. This gives $2^l-1$ choices, which is the same as
$b_{1,l}$. Suppose that the number of such set partitions
$A_1|\ldots|A_r$ of $l$ elements from $\{\pm 1, \ldots, \pm (d+1)\}$  is
$b_{r,l}$. Now, to form such set partition $A_1|\ldots|A_{r+1}$ of  $l$ elements, we first choose $i$ elements from $l$
remaining elements, where  $i>0$. This can be done in $2^i{l\choose
i}$ ways; and the set partition  $A_2|\ldots|A_{r+1}$ from remaining
$l-i$ elements can be done in $b_{r,l-i}$ ways (by induction
hypothesis). Thus we have $\sum_{i=1}^{l}2^i{l\choose i}b_{r,l-i}$
ways to form the required  set partitions of rank $r+1$ of $l$
elements.  By Lemma \ref{b}, we have $$
\sum_{i=1}^{l}2^i{l\choose i}b_{r,l-i}=b_{r+1,l}.$$

Thus, $|\mathcal{C}_{r,s}|=C_{r,s}$, the $(r,s)$-entry of the matrix $C$.

Let $$\mathcal{C}_{s}=\cup_{r=0}^{d}\mathcal{C}_{r,s}$$ be the collection of all set partitions of $d+1$ elements
 from $\{\pm1,\pm 2,\ldots,\pm (d+1)\}$ for which exactly one of $\pm i$ appears in $A$ with
  $\min A_0=s+1$ and $\max A_r>0$. Let $C_s(t)$ denote the generating function counting these set
 partitions according to the number of bars,$$C_s(t) = \sum_{A\in \mathcal{C}_{s}}t^{\rank A}=\sum_{r=0}^{d}C_{r,s}t^r. $$
 It can be noted that $C_s(t)$ is also  the generating function for $s$-th column of the matrix $C$. Also, a polynomial $M_s(t)$ can be viewed as multiplication of matrices:
 $$M_s(t)=M_s\cdot \mathbf{t},$$ where $M_s$ is the $s$-th column of the matrix $M$ and $\mathbf{t}$ denote the column vector of powers of $t$, $\mathbf{t}=(1,t,\ldots,t^d)^T$.\\

It can be noted that each set partition $A = A_0|A_1| \ldots |A_r$ can be mapped to a permutation
$\sigma = \sigma(A)$ by removing bars and writing each block in increasing
order. Since $\min A_0 = s +1$, this means $\sigma_{1} = s+1$, and $\max A_r>0$ means $\sigma_{d+1}>0$. That is, $\sigma\in B_{d+1,s+1}^{+} $.
Further, $\textrm{Des}_B(\sigma)\subset D$, where $D = D(A) = \{|A_0|, |A_0|+|A_1|, \ldots, |A_0|+|A_1|+\ldots +|A_{r-1}|\}$, i.e., there must be bars in $A$ where there are descents in $\sigma$. So, we can write

\begin{align*}
C_s(t)& = \sum_{A\in \mathcal{C}_{s}}t^{\rank A}\\
&=\sum_{I\subset \{1,\ldots, d\}}\sum_{A\in \mathcal{C}_{s},D(A)=I}t^{|I|}\\
&=\sum_{I\subset \{1,\ldots, d\}}\sum_{\sigma\in B^+_{d+1,s+1},\textrm{Des}_B(\sigma)\subset I}t^{|I|}\\
&=\sum_{\sigma\in B^+_{d+1,s+1}}\sum_{\textrm{Des}_B(\sigma)\subset I}t^{|I|}\\
&=\sum_{\sigma\in B^+_{d+1,s+1}}t^{\mathrm{des}_B(\sigma)}(1+t)^{d-\mathrm{des}_B(\sigma)}\\
&=(1+t)^d B^+_{d+1,s+1}(t/(1+t)).
\end{align*}
Since $C_s(t)$ reads the $s$-th column of $\mathcal{F}_d\mathcal{H}^{-1}_d$ , the polynomial $ \mathcal{H}_dC_s(t)=\mathcal{H}_dC_s\cdot \mathbf{t}=
B^+_{d+1,s+1}(t)$ reads the $s$-th column  of $\mathcal{R}_d=\mathcal{H}_d\mathcal{F}_d\mathcal{H}^{-1}_d$. Thus the columns of $\mathcal{R}_d$
are encoded by the $j$-Eulerian polynomials of type $B^+$.\, \, \, \, \, \, \, \, \, \, \, \, \, \, \, \, \, \, \, \, \, \, \, \, \, \, \, \, \,  $\square$\\

\begin{coro}
Let $\Delta$ be a $(d-1)$-dimensional simplicial complex such that $h_r(\Delta)\geq 0$ for all $0\leq r\leq d$. Then for $0\leq r\leq d$, $$h_r{(\Int(\Delta))}\geq h_r(\Delta).$$
\end{coro}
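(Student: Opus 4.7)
The plan is to leverage Theorem~\ref{h} and reduce the inequality to a simple combinatorial existence statement. Theorem~\ref{h} gives
$$h_r(\Int(\Delta)) = \sum_{s=0}^{d} B^{+}(d+1, s+1, r)\, h_s(\Delta),$$
every coefficient of which is a non-negative integer, being the cardinality of a set of signed permutations. With all $h_s(\Delta) \geq 0$ by hypothesis, the difference
$$h_r(\Int(\Delta)) - h_r(\Delta) = \bigl(B^{+}(d+1, r+1, r) - 1\bigr)\, h_r(\Delta) + \sum_{s \neq r} B^{+}(d+1, s+1, r)\, h_s(\Delta)$$
is automatically non-negative as soon as the diagonal coefficient $B^{+}(d+1, r+1, r)$ is at least $1$.

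Thus the entire corollary reduces to producing, for each $0 \leq r \leq d$, a single signed permutation in $B^{+}_{d+1,\, r+1}$ with exactly $r$ descents of type $B$. I would use the uniform, entirely unsigned candidate
$$\sigma := (\, r+1,\, r,\, r-1,\, \ldots,\, 2,\, 1,\, r+2,\, r+3,\, \ldots,\, d+1\,),$$
which descends from $r+1$ down to $1$ and then ascends through the remaining values $r+2, \ldots, d+1$. This construction degenerates gracefully to the identity when $r=0$ and to the reverse permutation $(d+1, d, \ldots, 1)$ when $r=d$, so no separate boundary cases are needed.

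The main (and essentially only) step is then the descent count for $\sigma$ under the convention $\sigma_0 = 0$: the pair at position $0$ is $(0, r+1)$, an ascent; each of the $r$ consecutive pairs at positions $1, \ldots, r$ drops by one, contributing a descent; the transitional pair $(1, r+2)$ at position $r+1$ (when $r<d$) is an ascent; and the tail pairs $(i, i+1)$ for $r+2 \leq i \leq d$ are all ascents. Hence $\des_B(\sigma) = r$ exactly, while $\sigma_1 = r+1$ and $\sigma_{d+1} \in \{1, d+1\}$ is positive, placing $\sigma \in B^{+}_{d+1,\, r+1}$. This yields $B^{+}(d+1, r+1, r) \geq 1$ and closes the argument. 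I do not anticipate any real obstacle beyond this short bookkeeping verification.
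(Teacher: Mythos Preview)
Your proof is correct and follows the same approach as the paper: both use Theorem~\ref{h} together with the non-negativity of the coefficients $B^{+}(d+1,s+1,r)$ and the key fact $B^{+}(d+1,r+1,r)\geq 1$. The paper simply asserts this last inequality, whereas you supply an explicit witnessing permutation, which is a welcome addition.
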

\begin{proof}
  By  Theorem \ref{h}, we have $$h_r({\Int(\Delta)})=\sum_{s=0}^{d}B^+(d+1,s+1,r)h_s({\Delta}).$$ Since $B^+(d+1,r+1,r)\geq 1$ and $h_r(\Delta)\geq 0$  for all $r$,   therefore the result follows.
\end{proof}
\begin{ej}
  The above corollary  is not true in general. Consider a $3$-dimensional simplicial complex $\Delta=\langle 1234, 125, 345\rangle$ with $f$-vector $f(\Delta)=(1,5,10,6,1)$ and $h$-vector $h(\Delta)=(1,1,1,-3,1)$.
 Then,  $f({\Int}(\Delta))=(1,92,380,480,192)$ and $h({\Int}(\Delta))=(1,88,110,-8,1)$.
\end{ej}

Some basic facts about the numbers $B^+(d,s,r)$ are given in the following lemma:
\begin{lema}\label{B}
The following relations hold for $d\geq 1$, $1\leq s\leq d,\  0\leq r\leq d-1$:
\begin{enumerate}

    \item $$\sum_{r=0}^{d-1}B^+(d,s,r)=2^{d-2}(d-1)!.$$
    \item $$B^+(d,s,r)=B^+(d,d-s+1,d-r-1).$$
    \item $$B^+(d,s,r)=\sum_{j=1}^{d-1}B^+(d-1,-j,r)+\sum_{j=s}^{d-1}B^+(d-1,j,r)+\sum_{j=1}^{s-1}B^+(d-1,j,r-1).$$ Thus, the recurrence  relation holds:
    $$B^+_{d,s}(t)=t\sum_{j=1}^{s-1}B^+_{d-1,j}(t)+\sum_{j=s}^{d-1}B^+_{d-1,j}(t)+\sum_{j=1}^{d-1}B^+_{d-1,-j}(t),$$ with initial conditions $B^+_{1,1}(t)=1$ and $B^+_{1,-1}(t)=0$.
    \item  $$B^+(d,-s,r)=\sum_{j=1}^{d-1}B^+(d-1,j,r-1)+\sum_{j=s}^{d-1}B^+(d-1,-j,r-1)+\sum_{j=1}^{s-1}B^+(d-1,-j,r).$$ Thus, the recurrence  relation holds:
    $$B^+_{d,-s}(t)=t[\sum_{j=1}^{d-1}B^+_{d-1,j}(t)+\sum_{j=s}^{d-1}B^+_{d-1,-j}(t)]+\sum_{j=1}^{s-1}B^+_{d-1,-j}(t).$$

  \end{enumerate}
\end{lema}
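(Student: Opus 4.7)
The plan is to prove each of the four identities by direct counting or an explicit bijection on $B^+_{d,s}$, using throughout the convention $\sigma_0=0$ and the fact that descents of $\sigma\in B_d$ are the positions $i\in\{0,1,\ldots,d-1\}$ with $\sigma_i>\sigma_{i+1}$.

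Part (1) is direct: fixing $\sigma_1=s$ leaves a free signed permutation on the remaining $d-1$ absolute values ($(d-1)!\cdot 2^{d-1}$ choices), and exactly half of these satisfy $\sigma_d>0$ (pair each $\sigma$ with the permutation obtained by flipping the sign of $\sigma_d$), giving $2^{d-2}(d-1)!$.

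For part (2), I would define the complement involution $\phi: B^+_{d,s}\to B^+_{d,d-s+1}$ by
\[
\phi(\sigma)_i := \mathrm{sgn}(\sigma_i)\cdot\bigl(d+1-|\sigma_i|\bigr).
\]
This preserves signs pointwise, sends $\sigma_1=s$ to $d+1-s$, and keeps $\sigma_d>0$. The nontrivial step is to track descents: a case analysis on the signs of $(\sigma_i,\sigma_{i+1})$ shows that $\phi$ swaps descent and ascent exactly when the two entries have matching signs (both positive or both negative), and preserves descent/ascent status when they have opposite signs. Because $\sigma_1>0$ and $\sigma_d>0$, the number of positive-to-negative transitions in $\sigma$ equals the number of negative-to-positive transitions, so the two types of "sign-mismatched" pair positions cancel. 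A short bookkeeping argument then yields $\mathrm{des}_B(\phi(\sigma))=(d-1)-\mathrm{des}_B(\sigma)$, establishing the claimed symmetry.

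For parts (3) and (4), I would use a delete-and-rename bijection. Given $\sigma\in B_d$ with $\sigma_1=\pm s$, form the truncation $\sigma'=(\sigma_2,\ldots,\sigma_d)$ and apply the unique order-preserving bijection $\psi:\{\pm 1,\ldots,\pm d\}\setminus\{\pm s\}\to\{\pm 1,\ldots,\pm(d-1)\}$, which commutes with negation (it fixes $\pm 1,\ldots,\pm(s-1)$ and sends $\pm k$ to $\pm(k-1)$ for $k>s$). Then $\psi(\sigma')\in B_{d-1}$, and since $\psi$ preserves signs, $\psi(\sigma')_{d-1}>0$ whenever $\sigma_d>0$; hence $\psi(\sigma')\in B^+_{d-1,j}$ with $j:=\psi(\sigma_2)$. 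I would split according to the four cases $\sigma_2\in\{1,\ldots,s-1\}$, $\{s+1,\ldots,d\}$, $\{-(s-1),\ldots,-1\}$, or $\{-d,\ldots,-(s+1)\}$, and in each compare $\mathrm{des}_B(\sigma)$ with $\mathrm{des}_B(\psi(\sigma'))$: descents at positions $\ge 2$ in $\sigma$ transfer one-to-one to positions $\ge 1$ in $\psi(\sigma')$, while the contributions of position $0$ of $\psi(\sigma')$ and position $1$ of $\sigma$ (and, in part (4), position $0$ of $\sigma$) must be tabulated by hand. Summing over the four cases gives the recurrences for $B^+(d,s,r)$ and $B^+(d,-s,r)$, and multiplying by $t^r$ and summing over $r$ produces the corresponding polynomial recurrences.

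The main bookkeeping obstacle will be part (4). When $\sigma_1=-s$, position $0$ of $\sigma$ automatically contributes one descent, but position $1$ of $\sigma$ (testing whether $-s>\sigma_2$) and position $0$ of $\psi(\sigma')$ (testing whether $\psi(\sigma_2)<0$) must be carefully balanced across the four sub-cases — in particular distinguishing $\sigma_2<-s$ from $-s<\sigma_2<0$, where both contribute a descent at position $0$ of $\psi(\sigma')$ but only the former contributes a descent at position $1$ of $\sigma$. Once this is correctly tabulated, the recursion falls out, and all four parts follow.
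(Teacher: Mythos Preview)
Your proposal is correct and follows essentially the same approach as the paper: the involution $\phi$ you define for part~(2) is exactly the paper's map $\sigma\mapsto\bar\sigma$ (written differently but identical pointwise), and your delete-and-rename bijection for parts~(3)--(4) is precisely what the paper means by ``the effect of removing $\sigma_1=s$''. The paper's proof is much terser---it omits the cancellation argument $p=q$ you give in~(2) and says nothing beyond ``remove $\sigma_1$'' for~(3) and~(4)---so your plan simply fills in the details of the same argument.
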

\begin{proof}
(1) This follows from the definition of $B^+(d,s,r)$.
\\ (2) Let $\mathcal{B}^+(d,s,r)$  denote the set of all elements $\sigma\in B^+_{d,s}$ such that $\des_B(\sigma)=r$. There is a bijection between the sets
$\mathcal{B}^+(d,s,r)$ and $\mathcal{B}^+(d,d-s+1,d-r-1)$ given by
$\sigma=(\si_1,\ldots,\si_d) \mapsto
\bar{\sigma}=(\bar{\si}_1,\dots,\bar{\si}_d)$, where
$$\bar{\si}_i:=\left\{
               \begin{array}{ll}
                 d+1-\si_i, & \hbox{$\si_i>0$;} \\
                 -( d+1+\si_i), & \hbox{$\si_i<0$.}
               \end{array}
             \right .$$

Consider the following  three possible cases:
\begin{itemize}
  \item $\si_i>0>\si_{i+1}$ iff $\bar{\si}_i>0>\bar{\si}_{i+1}$
  \item $\si_i>\si_{i+1}>0$  iff  $\bar{\si}_{i+1}>\bar{\si}_{i}>0$
  \item $0>\si_i>\si_{i+1}$  iff  $0>\bar{\si}_{i+1}>\bar{\si}_{i}$
\end{itemize}
In the first case, $i\in \Des_B(\si) $ iff $i\in \Des_B(\bar{\si}) $ and in other two cases, we have $i\in \Des_B(\si) $ iff $i\notin \Des_B(\bar{\si}) $. It is clear that $0$ is not descent of $\si$ and $\bar{\si}$.  Thus, $\des_B(\sigma)+\des_B(\bar{\sigma})=d-1$.\\
  (3)  The recursion formula  follows from the effect of removing $\sigma_1=s$ from the signed permutation $\sigma$ in $B^+_{d,s}$ with $\des_B(\sigma)=r$.\\
  The proof of (4) is similar to (3).
\end{proof}

Recall  that the $h$-vector $h(\Delta)$  of a $(d-1)$-dimensional simplicial  complex $\Delta$ is reciprocal
if $h_i(\Delta)=h_{d-i}(\Delta)$ for $0\leq i\leq d$. This  condition is equivalent to the $h$-vector satisfying the Dehn-Sommerville
relations.

\begin{coro}\label{reciprocal}
  Let $\Delta$ be a $(d-1)$-dimensional simplicial complex with a reciprocal $h$-vector. Then $\Int{(\Delta)}$  has also a reciprocal $h$-vector.
\end{coro}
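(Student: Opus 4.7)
The plan is to combine Theorem \ref{h} with the symmetry in Lemma \ref{B}(2) and the hypothesis that $h(\Delta)$ is reciprocal, and then match coefficients by a single index substitution. Since reciprocity of the $h$-vector of $\Int(\Delta)$ means $h_r(\Int(\Delta)) = h_{d-r}(\Int(\Delta))$ for every $0 \le r \le d$, it suffices to verify this equality.

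First, I would write, using Theorem \ref{h},
\begin{equation*}
h_{d-r}(\Int(\Delta)) = \sum_{s=0}^{d} B^+(d+1, s+1, d-r)\, h_s(\Delta).
\end{equation*}
Next, I would apply Lemma \ref{B}(2) with parameters $(d+1, s+1, d-r)$ in place of $(d, s, r)$, which yields
\begin{equation*}
B^+(d+1, s+1, d-r) = B^+(d+1, (d+1)-(s+1)+1, (d+1)-(d-r)-1) = B^+(d+1, d-s+1, r).
\end{equation*}
Substituting this identity and then performing the reindexing $s \mapsto d-s$ (which permutes the summation range $\{0,1,\dots,d\}$), I obtain
\begin{equation*}
h_{d-r}(\Int(\Delta)) = \sum_{s=0}^{d} B^+(d+1, d-s+1, r)\, h_s(\Delta) = \sum_{s=0}^{d} B^+(d+1, s+1, r)\, h_{d-s}(\Delta).
\end{equation*}
Finally, invoking the reciprocity hypothesis $h_{d-s}(\Delta) = h_s(\Delta)$ and applying Theorem \ref{h} again gives
\begin{equation*}
h_{d-r}(\Int(\Delta)) = \sum_{s=0}^{d} B^+(d+1, s+1, r)\, h_s(\Delta) = h_r(\Int(\Delta)),
\end{equation*}
which is the desired reciprocity.

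There is no real obstacle here; the proof is essentially an index bookkeeping exercise, and the only subtlety is verifying that the specialization of the symmetry in Lemma \ref{B}(2) is applied with the shifted dimension $d+1$ coming from Theorem \ref{h}. I would double-check the arithmetic $(d+1) - (s+1) + 1 = d-s+1$ and $(d+1) - (d-r) - 1 = r$ explicitly to avoid off-by-one errors, but beyond that the argument is immediate.
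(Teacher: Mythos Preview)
Your proof is correct and follows exactly the route the paper indicates: it invokes Theorem \ref{h} for the $h$-vector of $\Int(\Delta)$, applies the symmetry Lemma \ref{B}(2) (with the shift to $d+1$), reindexes, and uses the reciprocity hypothesis on $h(\Delta)$. The paper's own proof simply states that the result follows from these two ingredients without writing out the index computation, so you have merely made explicit what the paper leaves to the reader.
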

\begin{proof}
The result  follows from above Lemma \ref{B}(2) and Theorem \ref{h}.
\end{proof}
\subsection*{Some Properties of the Transformation Matrices }
In this subsection, we describe some properties of the transformation matrices. The motivation to study the eigenvalues and eigenvectors of these matrices is to study the limit behaviour of the roots under successive interval subdivisions, which is discussed in the next section. We know from  Theorem \ref{f} and Theorem \ref{h} that
$$f({\Int(\Delta)})=\mathcal{F}_d f({\Delta})$$
and
$$h({\Int(\Delta)})=\mathcal{R}_d h({\Delta}).$$
\begin{lema}\label{diagonalization}
Let $d\geq 1$.
\begin{enumerate}
  \item The matrices $\F_d$ and $\R _d$ are similar.
  \item   The matrices $\F_d$ and $\R _d$ are diagonalizable with eigenvalues $1$ of multiplicity $2$ and eigenvalues $2\cdot 2!, 2^2\cdot 3!, \ldots,  2^{d-1}\cdot d!$ of  multiplicity $1$.
\end{enumerate}
\end{lema}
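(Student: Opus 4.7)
The plan is to reduce everything to $\mathcal{F}_d$, since part (1) is essentially definitional: the identity $\mathcal{R}_d=\mathcal{H}_d\mathcal{F}_d\mathcal{H}_d^{-1}$ recorded just before Theorem \ref{h} displays $\mathcal{R}_d$ as a conjugate of $\mathcal{F}_d$, so the two matrices share their spectrum and are simultaneously diagonalizable or not. It therefore suffices to handle $\mathcal{F}_d$.

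The central step is to show that $\mathcal{F}_d$ is upper triangular with the advertised diagonal entries. Starting from \eqref{matrix}, the substitution $m=k-j$ rewrites the alternating sum as a forward finite difference,
\[
b_{k,l}=\Delta^{k-1}h_l(1),\qquad h_l(x):=(2x)^l-(2x-1)^l,
\]
where $\Delta f(x)=f(x+1)-f(x)$. The two leading $(2x)^l$ terms in $h_l$ cancel, so for $l\ge 1$ the function $h_l$ is a polynomial in $x$ of degree exactly $l-1$ with leading coefficient $l\cdot 2^{l-1}$ (and $h_0\equiv 0$). Two consequences then fall out immediately: (i) $b_{k,l}=0$ whenever $l<k$, because $\Delta^{k-1}$ annihilates polynomials of degree less than $k-1$, which gives upper triangularity; and (ii) using the standard identity $\Delta^{k-1}x^{k-1}=(k-1)!$, one obtains $b_{k,k}=l\cdot 2^{l-1}\cdot(k-1)!=k\cdot 2^{k-1}(k-1)!=2^{k-1}k!$ for $k\ge 1$. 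Combined with $b_{0,0}=1$, this identifies the spectrum of $\mathcal{F}_d$ with the multiset $\{1,1,2\cdot 2!,2^2\cdot 3!,\ldots,2^{d-1}d!\}$.

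It then remains to promote ``upper triangular with these diagonal entries'' to ``diagonalizable''. All eigenvalues other than $1$ are simple, so each automatically yields a one-dimensional eigenspace. Only the doubled eigenvalue $1$ needs attention, and I would resolve it by exhibiting the first two standard basis vectors $e_0,e_1$ as two linearly independent $1$-eigenvectors. The same finite-difference rewrite gives $h_0\equiv 0$, forcing the first column of $\mathcal{F}_d$ to be $e_0$; and $h_1\equiv 1$ is constant, so $\Delta^{k-1}h_1(1)=0$ for every $k\ge 2$, which together with $b_{0,1}=0$ and $b_{1,1}=1$ makes the second column $e_1$. Assembling one eigenvector per eigenvalue (with two for $\lambda=1$) yields a full basis of eigenvectors, so $\mathcal{F}_d$ is diagonalizable, and $\mathcal{R}_d$ inherits this property via the similarity from part (1).

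The only genuinely delicate point in the argument is the reformulation $b_{k,l}=\Delta^{k-1}h_l(1)$; everything else reduces to inspecting degrees and leading coefficients of $h_l$, together with the transparent low-degree cases $l=0,1$ that simultaneously deliver the second $1$-eigenvector.
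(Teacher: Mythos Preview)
Your argument is correct and follows exactly the route the paper takes: similarity via $\mathcal{R}_d=\mathcal{H}_d\mathcal{F}_d\mathcal{H}_d^{-1}$, upper triangularity of $\mathcal{F}_d$ with diagonal $1,1,2\cdot 2!,\ldots,2^{d-1}d!$, and the first two unit vectors serving as independent eigenvectors for the repeated eigenvalue $1$. The only difference is expository: the paper simply asserts the upper-triangular shape and diagonal entries, whereas you supply an actual proof of these facts via the finite-difference rewrite $b_{k,l}=\Delta^{k-1}h_l(1)$ with $h_l(x)=(2x)^l-(2x-1)^l$, which is a clean way to justify what the paper leaves implicit.
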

\begin{proof}
First assertion follows from the facts that the transformation from $f({\Delta})$ to $h({\Delta})$ is an invertible linear transformation and by Theorem \ref{f} and Theorem \ref{h}. The second assertion follows as  $\F_d$ is an upper triangular matrix with diagonal $1, 1, 2\cdot 2!, 2^2\cdot 3!, \ldots, 2^{d-1}\cdot d!$; the first and the second unit vectors are eigenvectors for the eigenvalue $1$.
\end{proof}
The next result follows from the fact that $\F_{d+1}$ and $\F_d$ are upper triangular and that if
one deletes the $(d + 2)$-nd column and row from $\F_{d+1}$ then one obtains $\F_d$.
\begin{lema}
Let $d \geq 1$ and  $v_1^1,v_1^2,v_{2\cdot 2!},\ldots , v_{2^{d-1}\cdot d!}$
 be a basis of $\mathbb{R}^{d+1}$ consisting of eigenvectors of the matrix $\F_d$, where $v_1^1, v_1^2$  are eigenvectors for the eigenvalue $1$
and $v_{2^i\cdot (i+1)!}$ is an eigenvector for the eigenvalue $2^i\cdot(i+1)!$, $1 \leq i \leq d-1$. Then the vectors
$(v_1^1,0), (v_1^2,0), (v_{2\cdot 2!},0),\ldots , (v_{2^{d-1}\cdot d!},0)$ are eigenvectors of $\F_{d+1}$ for the eigenvalues $1, 1, 2\cdot 2!, \ldots , 2^{d-1}\cdot d!$.
\end{lema}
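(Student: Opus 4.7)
The plan is to exploit the block structure of $\F_{d+1}$ noted immediately before the lemma. Since the formula \eqref{matrix} defining $b_{k,l}$ depends only on the indices $k$ and $l$ and not on the ambient matrix size, the $(k,l)$-entry of $\F_d$ equals the $(k,l)$-entry of $\F_{d+1}$ whenever $0 \le k,l \le d$. Combined with the upper-triangularity of $\F_{d+1}$ (observed in the proof of Lemma~\ref{diagonalization}), this allows me to write $\F_{d+1}$ in block form as
$$\F_{d+1} \;=\; \begin{pmatrix} \F_d & u \\ 0 & 2^{d}\cdot(d+1)! \end{pmatrix}$$
for some column vector $u \in \mathbb{R}^{d+1}$ (the truncated last column of $\F_{d+1}$), with the zero row below $\F_d$ forced by upper-triangularity.

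The second step is then an immediate block-matrix calculation. Given any eigenvector $v \in \mathbb{R}^{d+1}$ of $\F_d$ with eigenvalue $\lambda$,
$$\F_{d+1}\begin{pmatrix} v \\ 0 \end{pmatrix} \;=\; \begin{pmatrix} \F_d\, v + 0\cdot u \\ 0 \end{pmatrix} \;=\; \lambda \begin{pmatrix} v \\ 0 \end{pmatrix},$$
so $(v,0) \in \mathbb{R}^{d+2}$ is an eigenvector of $\F_{d+1}$ with eigenvalue $\lambda$. Applying this to each of $v_1^1, v_1^2, v_{2\cdot 2!}, \ldots, v_{2^{d-1}\cdot d!}$ in turn produces the required eigenvectors of $\F_{d+1}$ for the eigenvalues $1, 1, 2\cdot 2!, \ldots, 2^{d-1}\cdot d!$, respectively.

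There is no real obstacle here: the entire statement is essentially a one-line observation once the block decomposition is in place. The only point that might warrant a quick comment is the dimension-independence of \eqref{matrix}, which is what guarantees that the top-left $(d+1)\times(d+1)$ block of $\F_{d+1}$ is \emph{literally} $\F_d$ rather than merely being conjugate to it.
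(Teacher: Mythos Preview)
Your proof is correct and is precisely the argument the paper gives: the paper remarks that $\F_{d+1}$ is upper triangular and that deleting its last row and column yields $\F_d$, which is exactly the block decomposition you write down. Your version simply spells out the resulting one-line eigenvector calculation.
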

The following result follows due to    \cite[Lemma 6]{brenti2008f} and above lemmas.
\begin{lema}\label{eigenvector}
Let $\Delta$ be a $(d - 1)$-dimensional simplicial complex. Let $w_1^1, w_1^2, w_{2\cdot 2!}, \ldots,  w_{2^{d-1}\cdot d!}$
 be a basis of eigenvectors of the matrix $\R_d$, where $w_1^1, w_1^2$  are eigenvectors for the eigenvalue $1$
and $w_{2^i.(i+1)!}$ is an eigenvector for the eigenvalue $2^i\cdot (i+1)!$, $1 \leq i \leq d-1$.
\begin{enumerate}
\item  If we expand $h({\Delta}) = a^1_1w_1^1+a^2_1w_1^2+\sum_{i=1}^{d-1}a_{2^i\cdot (i+1)!}w_{2^i\cdot (i+1)!}$ in terms of the eigenvectors, then $a_{2^{d-1}\cdot d! }    \neq 0.$
  \item The first and  last entry in $w_{2\cdot 2!}, \ldots,  w_{2^{d-1}\cdot d!}$ are zero.
  \item The vectors $w_1^1$ and $w_1^2$ can be chosen such that
$w_1^1= (1, i_1, \ldots , i_{2^{d-2}\cdot(d-1)!}, 0)$ and $w_1^2= (0, j_1, \ldots , j_{2^{d-2}\cdot(d-1)!}, 1)$, for real numbers $i_l,j_k$ for all $l,k$.
  \item The vector $w_{2^{d-1}\cdot d!}$ can be chosen such that $w_{2^{d-1}\cdot d!} = (0, b_1, \ldots, b_{2^{d-2}\cdot(d-1)!}, 0)$ for strictly positive
rational numbers $ b_{2^{i}\cdot(i+1)!}$, $0 \leq i \leq d-2$.
\end{enumerate}
\end{lema}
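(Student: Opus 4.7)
The plan is to establish the four parts in the order (2), (3), (4), (1), since each builds on the previous. I would first observe, directly from the combinatorial definition of $B^+(d+1,s+1,r)$, that the first row of $\R_d$ (the $r=0$ row) is $(1,0,\dots,0)$ and the last row (the $r=d$ row) is $(0,\dots,0,1)$. Indeed, the only $\sigma \in B^+_{d+1}$ with no descents is the identity $(1,2,\dots,d+1)$, contributing only to $s=0$; and the only $\sigma \in B^+_{d+1}$ with $d$ descents is $(d+1,d,\dots,1)$, contributing only to $s=d$. Reading off these two rows in the eigenvalue equation $\R_d w = \lambda w$ gives $w_0 = \lambda w_0$ and $w_d = \lambda w_d$, so $\lambda \ne 1$ forces $w_0 = w_d = 0$, which is part (2).

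For part (3), let $V_0 = \{w \in \mathbb{R}^{d+1} : w_0 = w_d = 0\}$, a $(d-1)$-dimensional subspace. By (2) the $d-1$ eigenvectors $w_{2\cdot 2!},\dots,w_{2^{d-1}d!}$ all lie in $V_0$, and being independent they span it. The eigenspace $E_1$ for $\lambda=1$ has dimension $2$ by Lemma~\ref{diagonalization}, so $E_1 \cap V_0 = 0$ and the projection $\pi\colon E_1 \to \mathbb{R}^2$, $w \mapsto (w_0, w_d)$, is an isomorphism. Preimages of $(1,0)$ and $(0,1)$ yield eigenvectors $w_1^1, w_1^2$ of the prescribed shape.

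For part (4), observe that $\R_d$ preserves $V_0$ and acts there by the principal submatrix $\tilde{\R}_d = [B^+(d+1,s+1,r)]_{1\le s,r\le d-1}$, whose spectrum is $\{2\cdot 2!,\,2^2\cdot 3!,\dots,2^{d-1}d!\}$. The key combinatorial input is that every entry of $\tilde{\R}_d$ is strictly positive: for each admissible $s,r$ one exhibits a $\sigma \in B^+_{d+1,s+1}$ with exactly $r$ descents, or argues positivity inductively using Lemma~\ref{B}(3). Perron--Frobenius then produces a strictly positive eigenvector for the simple largest eigenvalue $2^{d-1}d!$, which is rational because $\tilde{\R}_d$ has integer entries and the eigenvalue is an integer. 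Embedding this eigenvector into $\mathbb{R}^{d+1}$ by prepending and appending a zero gives $w_{2^{d-1}d!}$ in the required form.

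Finally, part (1) uses Lemma~\ref{B}(1): $\sum_{r} B^+(d+1,s+1,r) = 2^{d-1}d!$ for every $s$, so $\mathbf{1}^T$ is a left eigenvector of $\R_d$ for the eigenvalue $2^{d-1}d!$. Since this eigenvalue is simple and distinct from all the others, left/right eigenvector orthogonality gives $\mathbf{1}^T w_\lambda = 0$ for every other $w_\lambda$. Applying $\mathbf{1}^T$ to the eigenexpansion of $h(\Delta)$ yields
$$f_{d-1}(\Delta) \;=\; \sum_{k} h_k(\Delta) \;=\; \mathbf{1}^T h(\Delta) \;=\; a_{2^{d-1}d!}\,\mathbf{1}^T w_{2^{d-1}d!},$$
where $f_{d-1}(\Delta) \ge 1$ since $\Delta$ is $(d-1)$-dimensional, and $\mathbf{1}^T w_{2^{d-1}d!} > 0$ by part (4). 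Hence $a_{2^{d-1}d!} > 0$, and in particular is nonzero. The main obstacle in this plan is the positivity claim underpinning part (4): one must verify that no interior entry $B^+(d+1,s+1,r)$ (for $1\le s,r\le d-1$) vanishes, a concrete combinatorial statement about signed permutations that I would address either by a direct construction or by induction through the recurrence in Lemma~\ref{B}(3).
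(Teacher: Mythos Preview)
Your argument is correct and complete. The paper itself does not give a self-contained proof of this lemma: it simply writes ``The following result follows due to \cite[Lemma~6]{brenti2008f} and above lemmas,'' i.e.\ it points to the analogous statement for the barycentric subdivision and asserts that the same reasoning goes through. What you have written is precisely that reasoning, carried out in the interval-subdivision setting: the unit first and last rows of $\R_d$ (from $B^+(d{+}1,s{+}1,0)$ and $B^+(d{+}1,s{+}1,d)$), the splitting off of the $\lambda=1$ eigenspace via the projection $w\mapsto(w_0,w_d)$, Perron--Frobenius on the interior block for part~(4), and the column-sum/left-eigenvector trick together with $\sum_k h_k(\Delta)=f_{d-1}(\Delta)\ge 1$ for part~(1). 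So your approach and the paper's (deferred) approach coincide.

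On the ``obstacle'' you flag for part~(4): the strict positivity of the interior entries $B^+(d{+}1,s{+}1,r)$ for $1\le s,r\le d-1$ is in fact immediate. If one restricts to signed permutations with all entries positive, then $\des_B=\des_A$ (since $\sigma_1>0$ forces $0\notin\Des_B$), and hence $B^+(d{+}1,s{+}1,r)\ge A(d{+}1,s{+}1,r)$. For $1\le r\le d-1$ one constructs, for any $j=s{+}1\in\{2,\dots,d\}$, a permutation in $A_{d+1,j}$ with exactly $r$ descents by concatenating a decreasing block of length $r$ with an increasing block (adjusting the position of $j$); this gives $A(d{+}1,s{+}1,r)>0$, and therefore the interior block of $\R_d$ is entrywise positive, so Perron--Frobenius applies as you intend.
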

\begin{remark}
  It can  easily be seen that the eigenvector $(v_1, \ldots,  v_{d+1})$ of $\F_d$ for the eigenvalue different from $2^{d-1}\cdot d!$ satisfies the identity $\sum_{i=1}^{d+1}v_i=0$.
\end{remark}
\subsection*{Limit behavior of the roots}
Let $\De$ be a $(d-1)$-dimensional simplicial complex. For $n\geq 1$, let $\Int^{(n)}(\De)$ denote the $n$-th interval subdivision of $\De$.
\begin{teo}\label{limit}
  The roots of $f$-polynomials of successive interval subdivisions of $\De$ converge to fixed values depending only on dimension of $\De$.
\end{teo}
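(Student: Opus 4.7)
The plan is to run the standard power-iteration argument, in the spirit of Brenti--Welker \cite{brenti2008f} for the barycentric case. By iterating Theorem~\ref{f} we have
$$f(\Int^{(n)}(\De)) = \F_d^{\,n}\, f(\De),$$
so the asymptotic behavior of the $f$-vector is governed by the spectral decomposition of $\F_d$. Lemma~\ref{diagonalization} tells us that $\F_d$ is diagonalizable and that $\lambda_d := 2^{d-1}d!$ is its unique largest eigenvalue, of multiplicity one, while all other eigenvalues are strictly smaller.

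Let $v_{\lambda_d}$ denote a fixed eigenvector for $\lambda_d$, normalized so its last coordinate is $1$, and expand
$$f(\De) = c\cdot v_{\lambda_d} + w,$$
where $w$ lies in the span of the remaining eigenvectors. Since $\F_d$ is upper triangular with $\lambda_d$ sitting in the last diagonal slot, a direct back-substitution shows that every eigenvector attached to an eigenvalue other than $\lambda_d$ has vanishing last coordinate. Reading off the last coordinate of both sides therefore yields $c = f_{d-1}(\De)$. Because $\De$ is $(d-1)$-dimensional we have $f_{d-1}(\De) \geq 1$, so $c \neq 0$. (Alternatively, one may transfer Lemma~\ref{eigenvector}(1) from $\R_d$ to $\F_d$ through the similarity coming from Lemma~\ref{diagonalization}(1).)

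Dividing by $\lambda_d^{\,n}$ and using that every other eigenvalue $\mu$ of $\F_d$ satisfies $|\mu/\lambda_d|<1$, the contribution from $w$ decays geometrically and we obtain
$$\lim_{n\to\infty}\frac{f(\Int^{(n)}(\De))}{\lambda_d^{\,n}} = c\cdot v_{\lambda_d}.$$
Hence the normalized $f$-polynomials converge coefficient-wise to the polynomial
$p(t) := c\sum_{i=0}^{d}(v_{\lambda_d})_i\, t^{i}$,
whose leading coefficient $c\cdot(v_{\lambda_d})_d = c$ is nonzero. Classical continuity of roots for polynomials of fixed degree with leading coefficient bounded away from zero (for instance via Hurwitz's theorem) now gives that the roots of the $f$-polynomial of $\Int^{(n)}(\De)$ converge, counted with multiplicity, to the $d$ roots of $p(t)$. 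Since $v_{\lambda_d}$ is intrinsic to $\F_d$, and $\F_d$ depends only on $d$, these limiting roots depend only on $\dim \De$, as required.

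The main obstacle in this plan is the nonvanishing of the dominant coefficient $c$, on which the whole power-iteration argument hinges. Once this is secured, the remainder is essentially the elementary fact that a sequence of vectors normalized by the dominant eigenvalue converges to the dominant eigenvector, coupled with continuity of roots; the present setting makes the nonvanishing of $c$ particularly cheap thanks to the upper triangular structure of $\F_d$ and the trivial lower bound $f_{d-1}(\De) \geq 1$.
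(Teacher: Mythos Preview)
Your argument is correct and is precisely the power-iteration/dominant-eigenvalue argument that the paper invokes by citing Lemma~\ref{diagonalization} together with the proof of \cite[Theorem~3.7]{delucchi2012face}; you have simply written out the details where the paper defers to that reference. The one small difference is that the paper points to \cite{delucchi2012face} rather than \cite{brenti2008f}, but the underlying mechanism (diagonalizability, simple dominant eigenvalue $2^{d-1}d!$, nonvanishing of the dominant coefficient via $f_{d-1}(\De)\geq 1$, and continuity of roots) is the same.
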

\begin{proof}
The assertion follows from Lemma \ref{diagonalization} and from arguments used in the proof of Theorem 3.7( for barycentric case) in \cite{delucchi2012face}.

\end{proof}

\section{Real-rootedness of $h$-polynomial}

We start this section with the description of the $j$-Eulerian
polynomial of type $B^+$ and $B^-$.  Lets recall that a polynomial
$p(x)=\sum_{i=0}^{d}a_ix^i$ with real coefficients is
\textit{unimodal}
 if there exists $ j$ such that $a_0\leq a_1\cdots \leq  a_{j}\geq a_{j+1}\geq \cdots \geq a_d.$ A polynomial $p(x)$
 is said to be \textit{log-concave} if  $a_i^2\geq a_{i-1}a_{i+1}$ for $1\leq i\leq d-1$. It is well-known that if $p(x)$ is
  real-rooted with non-negative coefficients, then $p(x)$ is log-concave and
  unimodal. Here, we present the main result of this section.

\begin{teo}\label{h-poly}

  Let $\Delta$ be a $(d-1)$-dimensional simplicial complex such that $h$-vector $h(\Delta)=(h_0(\Delta),\ldots, h_d(\Delta))$ is non-negative.
  Then the $h$-polynomial  $$h(\Int(\Delta),t)=\sum_{i=0}^{d}h_i(\Int(\Delta))t^i$$ has only real roots.
  In particular, the $h$-polynomial $h(\Int(\Delta),t)$ is  log-concave, and hence unimodal.
\end{teo}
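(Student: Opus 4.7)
The strategy is to deduce Theorem \ref{h-poly} from a stronger structural statement about the refined Eulerian polynomials of type $B^+$. By Theorem \ref{h},
$$
h(\Int(\Delta),t) \;=\; \sum_{s=0}^{d} h_s(\Delta)\, B^+_{d+1,\,s+1}(t),
$$
so once we show that the family $\{B^+_{d+1,\,s+1}(t) : 0\le s\le d\}$ is \emph{compatible}, meaning every non-negative linear combination of its members has only real zeros, the first assertion of Theorem \ref{h-poly} follows at once. The second assertion (log-concavity, unimodality) is the standard consequence of having only real roots together with non-negative coefficients, the non-negativity of the coefficients of $h(\Int(\Delta),t)$ being clear from the fact that the $B^+(d+1,s+1,r)$ are non-negative integers.

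The plan for the compatibility statement is to prove it by induction on $d$, but for the larger family
$$
\mathcal{B}^+_d \;:=\; \bigl(B^+_{d,-d}(t),\ldots,B^+_{d,-1}(t),\,B^+_{d,1}(t),\ldots,B^+_{d,d}(t)\bigr),
$$
since this is the natural unit on which the recurrences in Lemma \ref{B}(3)--(4) close. The base case $d=1$ is immediate from $B^+_{1,1}(t)=1$ and $B^+_{1,-1}(t)=0$. For the inductive step I will combine the two standard tools from the theory of real-rooted polynomials: (i) if $(p_1,\ldots,p_n)$ is a compatible family of polynomials with non-negative coefficients, then any non-negative linear combination $\sum c_i p_i$ is real-rooted and interlaces the common interlacer of the family; and (ii) if $P(t)$ interlaces $Q(t)$ (with appropriate sign conventions), then $tP(t)+Q(t)$ is real-rooted and again interlaces $P$ and $Q$ suitably. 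Lemma \ref{B}(3) writes each $B^+_{d,s}(t)$ as $tP_s(t)+Q_s(t)$, where $P_s$ and $Q_s$ are explicit non-negative sums of members of $\mathcal{B}^+_{d-1}$; similarly Lemma \ref{B}(4) handles $B^+_{d,-s}(t)$. Applying (i) to these sums and (ii) to the resulting $tP+Q$ combinations yields each member of $\mathcal{B}^+_d$ as a real-rooted polynomial, and a careful bookkeeping of the interlacing signs shows that the whole family $\mathcal{B}^+_d$ has a common interlacer and is therefore compatible.

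Granting the compatibility of $\mathcal{B}^+_{d+1}$, the sub-family $\{B^+_{d+1,s+1}(t)\}_{s=0}^{d}$ is compatible as well. Since $h_s(\Delta)\ge 0$ for all $s$ by hypothesis, the non-negative combination giving $h(\Int(\Delta),t)$ has only real zeros, and the theorem follows.

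The main obstacle will be the bookkeeping in the inductive step: unlike the type-$A$ situation treated by Brenti--Welker, the type-$B^+$ recurrences mix the positive-indexed polynomials $B^+_{d-1,j}$ with the negative-indexed ones $B^+_{d-1,-j}$ and introduce the extra factor of $t$, so one must identify the correct total ordering of $\mathcal{B}^+_d$ under which the common interlacer from step $d-1$ propagates to step $d$. I expect that choosing the ordering $-d,-(d-1),\ldots,-1,\,1,2,\ldots,d$ together with the observation that the partial sums appearing in Lemma \ref{B}(3)--(4) are \emph{prefix/suffix} sums in this ordering will make the interlacing pattern transparent; verifying this compatibility-preservation rigorously is the heart of the argument. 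Once this is accomplished the real-rootedness of each individual refined polynomial $B^+_{d,s}(t)$ is an immediate by-product, giving the auxiliary statement promised in the introduction.
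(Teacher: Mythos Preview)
Your overall strategy coincides with the paper's: express $h(\Int(\Delta),t)$ via Theorem~\ref{h} as a non-negative combination of the $B^+_{d+1,s+1}(t)$, enlarge to the family $\{B^+_{d,j}(t): j\in\{\pm1,\ldots,\pm d\}\}$ so that the recurrences of Lemma~\ref{B}(3)--(4) close, and prove compatibility of this family by induction on $d$. The paper does exactly this.

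There is, however, a concrete error in your proposed ordering. Under the ordering $-d,\ldots,-1,1,\ldots,d$ the recurrences of Lemma~\ref{B}(3)--(4) do \emph{not} have the prefix/suffix structure you anticipate. For instance, for positive $s$ the recurrence reads
\[
B^+_{d,s}(t)=t\sum_{j=1}^{s-1}B^+_{d-1,j}(t)+\sum_{j=s}^{d-1}B^+_{d-1,j}(t)+\sum_{j=1}^{d-1}B^+_{d-1,-j}(t),
\]
and in your ordering the block multiplied by $t$ (namely $j=1,\ldots,s-1$) sits in the \emph{middle}, not at one end. The correct ordering is the paper's:
\[
1,2,\ldots,d,\,-d,-(d-1),\ldots,-1.
\]
With this ordering, one checks that for every $j\in\{\pm1,\ldots,\pm d\}$ the recurrence expresses $B^+_{d,j}(t)$ as $t\cdot(\text{initial segment})+(\text{terminal segment})$ of the list at level $d-1$, and moreover the cut points are weakly increasing as $j$ runs through $1,\ldots,d,-d,\ldots,-1$.

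Once the ordering is right, rather than redoing the interlacing bookkeeping by hand you can invoke the theorem of Chudnovsky--Seymour/Visontai--Williams (stated in the paper as Theorem~\ref{Comp}): if $f_1,\ldots,f_k$ satisfy that $f_i,f_j$ are compatible and $tf_i,f_j$ are compatible for all $i<j$, then so do all polynomials of the form $t\cdot(\text{prefix})+(\text{suffix})$ with weakly increasing cut points. This black-boxes precisely the ``careful bookkeeping'' you flagged as the main obstacle, and the induction closes immediately.
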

Towards the proof, we give some definitions and results
about  descent  statistics on the symmetric group $A_d$ and the
hyperoctahedral group $B_d$. Recall that for $\si\in A_d$, the
descent set $\Des_A(\si)=\{i\in [d-1]\ :\ \si_i>\si_{i+1}\}$ and the
descent number $\des_A(\si)=|\Des_A(\si)|$. Set $A_{d,j}=\{\sigma \in A_d :\ \sigma_1=j\}$. Denote the
$j$-Eulerian polynomial of type $A$ as
\begin{equation}\label{A}
  A_{d,j}(t)=\sum_{k=0}^{d-1}A(d,j,k)t^k,
\end{equation}
where $A(d,j,k)$ is the number of permutations $\si\in A_d$ with
$\si_1=j$ and $\des_A(\si)=k$. The next result gives the recurrence
relation for $A_{d,j}(t)$. For $j=1$, it is already known due to
\cite{10.2307/2319133} and \cite{garsia1979maj}.
\begin{lema}
   For $d\geq 1$ and $1\leq j< d$, we have
  \begin{equation}\label{Arecurrence}
 A_{d,j}(t)=(1+t(d-2))A_{d-1,j}+t(1-t)\frac{d}{dt}(A_{d-1,j}(t))
  \end{equation}
  \end{lema}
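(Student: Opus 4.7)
The plan is to prove the recurrence by extending a permutation $\tau\in A_{d-1,j}$ to a permutation $\sigma\in A_{d,j}$ via the standard insertion of the largest letter, and tracking the change in descents.

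First I would set up the insertion map. Since $\sigma_1=j\in\{1,\ldots,d-1\}$, every $\sigma\in A_{d,j}$ arises uniquely from a $\tau\in A_{d-1,j}$ by inserting the letter $d$ at some position $p\in\{2,3,\ldots,d\}$ (position $p=1$ is forbidden since it would force $\sigma_1=d\neq j$). Then I would classify the effect on the descent statistic, case by case:
\begin{itemize}
\item If $p=d$, the inserted letter sits at the end; since $\tau_{d-1}<d$, no new descent is created, so $\des_A(\sigma)=\des_A(\tau)$.
\item If $2\le p\le d-1$ and $p-1\in\Des_A(\tau)$, the old descent at $p-1$ is replaced by the descent at $p$ (because $\tau_{p-1}<d>\tau_p$), and $\des_A(\sigma)=\des_A(\tau)$.
\item If $2\le p\le d-1$ and $p-1\notin\Des_A(\tau)$, the new insertion creates one additional descent, so $\des_A(\sigma)=\des_A(\tau)+1$.
\end{itemize}
Writing $k=\des_A(\tau)$, this gives exactly $k+1$ insertion positions that preserve the descent count (the $k$ descents of $\tau$ together with the rightmost position) and exactly $(d-2)-k$ insertion positions that increase it by one.

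Next I would translate the count into a polynomial identity. Summing over $\tau\in A_{d-1,j}$ yields
\begin{equation*}
A_{d,j}(t)=\sum_{k\ge 0}A(d-1,j,k)\bigl[(k+1)t^{k}+(d-2-k)t^{k+1}\bigr].
\end{equation*}
Expanding and regrouping gives
\begin{equation*}
A_{d,j}(t)=\bigl(1+(d-2)t\bigr)A_{d-1,j}(t)+\sum_{k\ge 0}kA(d-1,j,k)\bigl(t^{k}-t^{k+1}\bigr).
\end{equation*}
Recognizing $\sum_{k\ge 0}kA(d-1,j,k)t^{k-1}=\tfrac{d}{dt}A_{d-1,j}(t)$, the last sum equals $t(1-t)\tfrac{d}{dt}A_{d-1,j}(t)$, which produces the claimed identity.

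I do not expect a serious obstacle; the argument is the usual insertion recurrence for Eulerian polynomials, adapted to the refined count fixing $\sigma_1=j$. The only subtlety is ensuring that position $1$ is indeed disallowed (which is why the leading coefficient is $(d-2)$ rather than the familiar $(d-1)$ in the unrefined case) and handling the base $d=2$ correctly, where $A_{1,j}(t)=1$ for $j=1$ and the recurrence consistently returns $A_{2,1}(t)=1$.
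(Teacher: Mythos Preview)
Your proposal is correct and follows essentially the same approach as the paper: both insert the letter $d$ into a permutation $\tau\in A_{d-1,j}$ at positions $p\in\{2,\ldots,d\}$, classify how the descent number changes according to whether $p-1$ is a descent of $\tau$ (or $p=d$), and then reorganize the resulting sum into the form $(1+(d-2)t)A_{d-1,j}(t)+t(1-t)\tfrac{d}{dt}A_{d-1,j}(t)$. The only cosmetic difference is that you count insertions per descent value $k$ while the paper sums directly over $\tau$.
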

  \begin{proof}
Let $\si=j\si_2 \cdots \si _{d-1}\in A_{d-1,j}$. For $i \in \{2,
\ldots, d\}$, define   $\si^i$  in $A_{d,j}$  obtained from $\si$ by
inserting $d$
 at the $i$th position. For $2 \leq i \leq d $, we have  $\si^i\in A_{d,j}$  if and only if $\si\in A_{d-1,j}$.  Moreover, for  $2 \leq i \leq d-1$, we have
$$\des_A(\si^{ i})=\left\{
                          \begin{array}{ll}
                            \des_A(\si), & \hbox{if $i-1\in \Des_A(\si)$;} \\
                            \des_A(\si)+1 , & \hbox{otherwise.}
                          \end{array}
                        \right.
 $$
and $ \des_A(\si)= \des_A(\si^{d})$. Therefore,
\begin{align*}
  A_{d,j}(t)= & \sum_{\si\in A_{d,j}}t^{ \des_A(\si)}=\sum_{i=2}^{d-1}\big(\sum_{\si\in A_{d-1,j}}t^{ \des_A(\si^i)}\big)+ \sum_{\si\in A_{d-1,j}}t^{ \des_A(\si^{d})}\\
  = &  \sum_{\si\in A_{d-1,j}}\big(\des_A(\si)t^{\des_A(\si)}+(d-2-\des_A(\si))t^{\des_A(\si)+1}\big)+A_{d-1,j}(t)\\
  = & (d-2)\sum_{\si\in A_{d-1,j}}t^{\des_A(\si)+1}+(1-t)\sum_{\si\in A_{d-1,j}} \des_A(\si)t^{\des_A(\si)}+A_{d-1,j}(t)\\
  = & (1+(d-2)t)A_{d-1,j}(t)+t(1-t)\frac{d}{dt}(A_{d-1,j}(t)).
\end{align*}
 \end{proof}
The next result gives the recurrence relations for $B^+_{d,j}(t)$ and
$B^-_{d,j}(t)$. It has already been proved in
\cite{athanasiadis2013symmetric} for $j=1$.
\begin{lema}
  For $d\geq 1$ and $1\leq j\leq d$, we have
  \begin{equation}\label{B+recurrence}
  B^+_{d,j}(t)=2(d-2)tB^+_{d-1,j}(t)+2t(1-t)\frac{d}{dt}(B^+_{d-1,j}(t))+B_{d-1,j}(t)
  \end{equation} and
\begin{equation}\label{B-recurrence}
  B^-_{d,j}(t)=2(d-2)tB^-_{d-1,j}(t)+2t(1-t)\frac{d}{dt}(B^-_{d-1,j}(t))+tB_{d-1,j}(t)
  \end{equation}
\end{lema}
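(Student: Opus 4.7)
The plan is to mimic the proof of the type~$A$ recurrence just given, by constructing every element of $B^\pm_{d,j}$ via insertion of $\pm d$ into a signed permutation in $B_{d-1,j}$ and tracking the resulting change in $\des_B$. Concretely, for $\sigma = j\,\sigma_2\cdots\sigma_{d-1}\in B_{d-1,j}$ and $i\in\{2,\dots,d\}$, let $\sigma^{i,+}$ (resp.\ $\sigma^{i,-}$) denote the signed permutation obtained from $\sigma$ by inserting $d$ (resp.\ $-d$) in the $i$-th slot. Since $|\pm d|>|\sigma_k|$ for all $k$, both $\sigma^{i,\pm}$ retain $\sigma_1=j$.

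The first step is to set up the bijective decomposition. When $i=d$, the last letter of $\sigma^{d,+}$ is $+d>0$ and that of $\sigma^{d,-}$ is $-d<0$, so these lie in $B^+_{d,j}$ and $B^-_{d,j}$ respectively for every $\sigma\in B_{d-1,j}$. When $2\le i\le d-1$, the last letter of $\sigma^{i,\pm}$ coincides with $\sigma_{d-1}$, so $\sigma^{i,\pm}\in B^+_{d,j}$ iff $\sigma\in B^+_{d-1,j}$ and likewise for the minus sign. Conversely, any $\tau\in B^\pm_{d,j}$ recovers uniquely the signed letter $\pm d$ and its position, giving genuine bijections.

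The second step is the local descent analysis for $2\le i\le d-1$. Inserting $+d$ makes position $i-1$ (comparing $\sigma_{i-1}$ with $+d$) a non-descent and position $i$ (comparing $+d$ with $\sigma_i$) a descent; inserting $-d$ reverses this. A case split shows that in either case
\[
\des_B(\sigma^{i,\pm}) = \begin{cases}\des_B(\sigma), & i-1\in\Des_B(\sigma),\\ \des_B(\sigma)+1, & i-1\notin\Des_B(\sigma),\end{cases}
\]
while $\des_B(\sigma^{d,+})=\des_B(\sigma)$ and $\des_B(\sigma^{d,-})=\des_B(\sigma)+1$. The crucial simplification is that $\sigma_1=j>0$ forces $0\notin\Des_B(\sigma)$, so all $k:=\des_B(\sigma)$ descents lie in $\{1,\dots,d-2\}=\{i-1:2\le i\le d-1\}$. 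Consequently, for each such $\sigma$,
\[
\sum_{i=2}^{d-1}t^{\des_B(\sigma^{i,\pm})} = k\,t^k+(d-2-k)\,t^{k+1}.
\]

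The final step is to sum. For $B^+_{d,j}(t)$, the decomposition gives a contribution $\sum_{\sigma\in B_{d-1,j}} t^{\des_B(\sigma)}=B_{d-1,j}(t)$ from $i=d$, and $\sum_{\sigma\in B^+_{d-1,j}}\bigl(2k\,t^k+2(d-2-k)t^{k+1}\bigr)$ from $2\le i\le d-1$. Rewriting the latter as $2(d-2)t\,B^+_{d-1,j}(t)+2(1-t)\sum_\sigma k\,t^k = 2(d-2)t\,B^+_{d-1,j}(t)+2t(1-t)\tfrac{d}{dt}B^+_{d-1,j}(t)$ yields \eqref{B+recurrence}. The analogous computation for $B^-_{d,j}(t)$, where the $i=d$ contribution picks up an extra $t$, produces \eqref{B-recurrence}. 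The only delicate point—really just careful bookkeeping—is the uniform treatment of the $\pm$ insertions at interior positions together with the use of $\sigma_1>0$ to pin down the descent range; everything else is the direct translation of the type-$A$ argument.
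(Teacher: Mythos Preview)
Your proof is correct and follows essentially the same approach as the paper: both construct elements of $B^{\pm}_{d,j}$ by inserting $\pm d$ into a signed permutation of $B_{d-1,j}$, perform the identical local descent analysis at interior versus terminal positions, and rewrite the resulting sum in the derivative form. The only cosmetic differences are that the paper writes out the $B^-$ case and invokes symmetry for $B^+$ while you do the reverse, and you make explicit the role of $j>0$ in forcing $0\notin\Des_B(\sigma)$, a point the paper uses silently.
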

\begin{proof}
  We will prove the relation \eqref{B-recurrence} and the proof of \eqref{B+recurrence} follows from the symmetry. Let $\si=j\si_2\cdots \si _{d-1}\in B_{d-1,j}$.
  For $i \in \{2, \ldots, d\}$, define   $\si^i$ and $\si^{-i}$ in $B_{d,j}$  obtained from $\si$ by inserting $d$ and  $-d$ respectively at the $i$th position. For $2 \leq i \leq d-1$, we have  $\si^i\in B^-_{d,j}$ (respectively, $\si^{-i}\in B^-_{d,j}$) if and only if $\si\in B^-_{d,j}$. On the other hand,  $\si^{-d}\in B^-_{d,j}$ and $\si^d\in B^+_{d,j}$  for every $\si\in B_{d,j}$. Moreover, for  $2 \leq i \leq d-1$, we have
 $$\des_B(\si^{\pm i})=\left\{
                         \begin{array}{ll}
                            \des_B(\si), & \hbox{if $i-1\in \Des_B(\si)$;} \\
                           \des_B(\si)+1 , & \hbox{otherwise.}
                          \end{array}
                        \right.
  $$ and $ \des_B(\si)= \des_B(\si^{-d})-1$. Therefore,
\begin{align*}
  B^-_{d,j}(t)= & \sum_{\si\in B^-_{d,j}}t^{ \des_B(\si)}=\sum_{i=2}^{d-1}\big(\sum_{\si\in B^-_{d-1,j}}t^{ \des_B(\si^i)}+t^{ \des_B(\si^{-i})}\big)+\sum_{\si\in B_{d-1,j}}t^{ \des_B(\si^{-d})} \\
  = & 2 \sum_{\si\in B^-_{d-1,j}}\big(\des_B(\si)t^{\des_B(\si)}+(d-2-\des_B(\si))t^{\des_B(\si)+1}\big)+tB_{d-1,j}(t)\\
  = & 2(d-2)\sum_{\si\in B^-_{d-1,j}}t^{\des_B(\si)+1}+2(1-t)\sum_{\si\in B^-_{d-1,j}} \des_B(\si)t^{\des_B(\si)}+tB_{d-1,j}(t)\\
  = & 2(d-2)tB^-_{d-1,j}(t)+2t(1-t)\frac{d}{dt}(B^-_{d-1,j}(t))+tB_{d-1,j}(t).
\end{align*}
  \end{proof}
\begin{lema}
  For $d\geq 1$ and $1\leq j\leq d$, we have
\begin{equation}\label{B+B-}
  B^+_{d,j}(t)=t^dB^-_{d,-j}(t^{-1})
\end{equation}and
 \begin{equation}\label{B-B+}
  B^+_{d,-j}(t)=t^dB^-_{d,j}(t^{-1})
\end{equation}
\end{lema}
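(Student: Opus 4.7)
\medskip

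\noindent\textbf{Proof proposal.} The natural plan is to produce an involution on $B_d$ that swaps the two halves $B^+_d$ and $B^-_d$, sends $\sigma_1$ to $-\sigma_1$, and complements the descent count in a controlled way. The obvious candidate is
\[
\iota \colon B_d \longrightarrow B_d, \qquad \iota(\sigma) \;=\; -\sigma \;:=\; (-\sigma_1,-\sigma_2,\dots,-\sigma_d).
\]
Clearly $\iota^2=\id$ and $\iota$ restricts to bijections $B^+_{d,j}\to B^-_{d,-j}$ and $B^+_{d,-j}\to B^-_{d,j}$, since negating all entries flips the signs of both $\sigma_1$ and $\sigma_d$. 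So it suffices to show that for every $\sigma\in B^+_{d,j}$ one has
\[
\des_B(\sigma)+\des_B(\iota(\sigma))\;=\;d,
\]
after which the identity $B^+_{d,j}(t)=t^{d}B^-_{d,-j}(t^{-1})$ falls out by reading off the generating function and substituting $\tau=\iota(\sigma)$; the companion identity $B^+_{d,-j}(t)=t^{d}B^-_{d,j}(t^{-1})$ is obtained by swapping the roles of $j$ and $-j$ (equivalently, it is the same statement applied to $\iota$ again).

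The descent count is verified index by index, using the convention $\sigma_0=0$. For $1\le i\le d-1$, the equivalence $\sigma_i>\sigma_{i+1}\iff -\sigma_i<-\sigma_{i+1}$ shows that
\[
\Des_B(\iota(\sigma))\cap\{1,\dots,d-1\}\;=\;\{1,\dots,d-1\}\setminus\Des_B(\sigma).
\]
So the descent sets of $\sigma$ and $\iota(\sigma)$ are exact complements inside $\{1,\dots,d-1\}$, contributing $d-1$ to the sum $\des_B(\sigma)+\des_B(\iota(\sigma))$. For the index $0$, note $0\in\Des_B(\sigma)\iff\sigma_1<0$ and $0\in\Des_B(\iota(\sigma))\iff -\sigma_1<0$; since $\sigma_1=j\ne0$, exactly one of these two conditions holds, contributing $1$ to the sum. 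Thus $\des_B(\sigma)+\des_B(\iota(\sigma))=d$ uniformly, and no case analysis on the sign of $j$ is needed beyond this one remark.

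Once the descent identity is in hand, the rest is mechanical:
\[
t^{d}B^-_{d,-j}(t^{-1})\;=\;\sum_{\tau\in B^-_{d,-j}}t^{d-\des_B(\tau)}\;=\;\sum_{\sigma\in B^+_{d,j}}t^{\des_B(\sigma)}\;=\;B^+_{d,j}(t),
\]
where the middle equality substitutes $\tau=\iota(\sigma)$ and uses $\des_B(\tau)=d-\des_B(\sigma)$. I do not anticipate a real obstacle here; the only place that needs care is bookkeeping the index $0$ and checking that the bijection $\iota$ genuinely exchanges $B^+_{d,j}$ with $B^-_{d,-j}$ (rather than, say, $B^-_{d,j}$), which is immediate from the definition because $\iota$ negates the last coordinate.
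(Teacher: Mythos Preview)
Your proof is correct and is exactly the approach the paper takes: the paper uses the same bijection $\sigma\mapsto(-\sigma_1,\dots,-\sigma_d)$ between $B^+_{d,j}$ and $B^-_{d,-j}$ and asserts without further detail that $\des_B(\sigma)+\des_B(\bar\sigma)=d$. Your index-by-index verification of this descent complementation, including the careful treatment of position $0$, simply supplies the ``straightforward'' computation the paper omits.
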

\begin{proof}
  There is a bijection between $B^+_{d,j}$ and $B^-_{d,-j}$ given as $\si\mapsto \bar{\si}$, where\linebreak $\bar{\si}=(-j,-\si_2,\cdots,-\si_d)$.
   It is clear that $\si\in B^+_{d,j}$ if and only if  $\bar{\si}\in B^-_{d,-j}$. It is straightforward to show that  $\des_B(\si)+\des_B(\bar{\si})=d$, therefore, \eqref{B+B-} follows. Similarly, \eqref{B-B+} also holds.
\end{proof}
Let $$T_{d,j}(t):=B^+_{d,j}(t^2)+\frac{1}{t}B^-_{d,j}(t^2).$$
 Since  for any $\si\in B^-_{d,j}$, $\des_B(\si)$ is always strictly positive, there is no constant term involved in $B^-_{d,j}(t^2)$.
 So, the right hand side of the above expression is a polynomial.
 \begin{teo}
For $d\geq 1$ and $1\leq j\leq d$, we have
\begin{equation}\label{Trecurrence}
T_{d,j}(t)=[1+t+(d-2)t^2]T_{d-1,j}(t)+t(1-t^2)\frac{d}{dt}(T_{d-1,j}(t))
  \end{equation}
  \end{teo}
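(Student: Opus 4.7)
The natural route is to derive the identity for $T_{d,j}(t)$ by combining the two recurrences \eqref{B+recurrence} and \eqref{B-recurrence} under the substitution $x=t^2$. First I would substitute $x=t^2$ in each recurrence and use the chain rule
\[
\Bigl[\tfrac{d}{dx}B^{\pm}_{d-1,j}(x)\Bigr]_{x=t^2}=\frac{1}{2t}\frac{d}{dt}\bigl[B^{\pm}_{d-1,j}(t^2)\bigr]
\]
to rewrite each $2x(1-x)\,\tfrac{d}{dx}$ factor as $t(1-t^2)\,\tfrac{d}{dt}$. This turns \eqref{B+recurrence} into an identity for $B^+_{d,j}(t^2)$ and \eqref{B-recurrence} into an identity for $B^-_{d,j}(t^2)$, each expressed as a polynomial in $t$ times $B^{\pm}_{d-1,j}(t^2)$ plus a $\tfrac{d}{dt}$-term plus a contribution from $B_{d-1,j}(t^2)$.

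Next I would divide the $B^-$-identity by $t$ and add it to the $B^+$-identity to form $T_{d,j}(t)$. The pure polynomial contributions combine as
\[
2(d-2)t^2 B^+_{d-1,j}(t^2)+2(d-2)t B^-_{d-1,j}(t^2)=2(d-2)t\bigl[\,tB^+_{d-1,j}(t^2)+B^-_{d-1,j}(t^2)\,\bigr]=2(d-2)t^2 T_{d-1,j}(t),
\]
using $tT_{d-1,j}(t)=tB^+_{d-1,j}(t^2)+B^-_{d-1,j}(t^2)$.

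The derivative contributions are the delicate part. Differentiating the defining relation $T_{d-1,j}(t)=B^+_{d-1,j}(t^2)+\tfrac{1}{t}B^-_{d-1,j}(t^2)$ gives
\[
\frac{d}{dt}T_{d-1,j}(t)=\frac{d}{dt}B^+_{d-1,j}(t^2)-\frac{1}{t^2}B^-_{d-1,j}(t^2)+\frac{1}{t}\frac{d}{dt}B^-_{d-1,j}(t^2),
\]
so
\[
t(1-t^2)\frac{d}{dt}B^+_{d-1,j}(t^2)+(1-t^2)\frac{d}{dt}B^-_{d-1,j}(t^2)=t(1-t^2)\frac{d}{dt}T_{d-1,j}(t)+\frac{1-t^2}{t}B^-_{d-1,j}(t^2).
\]
Finally I would absorb the stray $\tfrac{1-t^2}{t}B^-_{d-1,j}(t^2)$ together with the $(1+t)B_{d-1,j}(t^2)$ produced by summing the non-derivative contributions, using
\[
(1+t)B_{d-1,j}(t^2)+\frac{1-t^2}{t}B^-_{d-1,j}(t^2)=(1+t)\Bigl[B^+_{d-1,j}(t^2)+\tfrac{1}{t}B^-_{d-1,j}(t^2)\Bigr]=(1+t)T_{d-1,j}(t),
\]
which is a one-line check after expanding $B_{d-1,j}=B^+_{d-1,j}+B^-_{d-1,j}$.

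Collecting the three pieces produces a coefficient of $T_{d-1,j}(t)$ equal to $(1+t)+2(d-2)t^2$ plus the $t(1-t^2)\tfrac{d}{dt}T_{d-1,j}(t)$ contribution, which is the desired recurrence. The step that requires most care is the bookkeeping in combining the $\tfrac{d}{dt}$-terms with the residual $\tfrac{1-t^2}{t}B^-_{d-1,j}(t^2)$; no new combinatorial idea is needed beyond the chain rule and the algebraic identity displayed above.
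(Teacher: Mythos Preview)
Your proposal is correct and follows essentially the same route as the paper: substitute $t\mapsto t^2$ in the two recurrences \eqref{B+recurrence} and \eqref{B-recurrence}, convert the $x$-derivatives to $t$-derivatives via the chain rule, add the $B^+$-identity to $\tfrac{1}{t}$ times the $B^-$-identity, and then use the differentiated definition of $T_{d-1,j}$ to absorb the derivative and stray $B^-$ terms into $T_{d-1,j}$ and $\tfrac{d}{dt}T_{d-1,j}$. Note that your (correct) computation yields the coefficient $1+t+2(d-2)t^2$ in front of $T_{d-1,j}(t)$, not $1+t+(d-2)t^2$ as printed; a check at $d=3$, $j=1$ (where $T_{2,1}=1+t$ and $T_{3,1}=(1+t)^3$) confirms that the factor $2$ is needed, so the discrepancy is a typographical slip in the stated formula rather than in your argument.
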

\begin{proof}
Using product and chain rule, we have the following
\begin{equation}\label{dT}
  \frac{d}{dt}(T_{d-1,j}(t))=2t[\frac{d}{dt}(B^+_{d-1,j}(t^2))+ \frac{1}{t}\frac{d}{dt}(B^+_{d-1,j}(t^2))]-\frac{1}{t^2}B^+_{d-1,j}(t^2)
\end{equation}
  Using \eqref{B+recurrence}, \eqref{B-recurrence} and \eqref{dT}, we have
  \begin{align*}
     T_{d,j}(t)=&B^+_{d,j}(t^2)+\frac{1}{t}B^-_{d,j}(t^2) \\
     =&  2(d-2)t^2B^+_{d-1,j}(t^2)+2t^2(1-t^2)\frac{d}{dt}(B^+_{d-1,j}(t^2))+B_{d-1,j}(t^2)+ \\
     & \frac{1}{t}[2(d-2)t^2B^-_{d-1,j}(t^2)+2t^2(1-t^2)\frac{d}{dt}(B^-_{d-1,j}(t^2))+t^2B_{d-1,j}(t^2)] \\
    = & 2(d-2)t^2T_{d-1,j}(t)+2t^2(1-t^2)\frac{1}{2t}[\frac{d}{dt}(T_{d-1,j}(t))\\
     & +\frac{1}{t^2}B^-_{d-1,j}(t^2)]+(1+t)B_{d-1,j}(t^2)\\
    = &=2(d-2)t^2T_{d-1,j}(t)+t(1-t^2)\frac{d}{dt}(T_{d-1,j}(t))+\frac{1}{t}(1-t^2)B^-_{d-1,j}(t^2)\\
     & +(1+t)[B^+_{d-1,j}(t^2)+B^-_{d-1,j}(t^2)]\\
     =& [1+t+(d-2)t^2]T_{d-1,j}(t)+t(1-t^2)\frac{d}{dt}(T_{d-1,j}(t)).
  \end{align*}
\end{proof}
The following is the key result for proving real rootedness of $B^+_j(t)$ and $B^-_j(t)$. It generalizes
the relation  \cite[Equation (11)]{yang2015real} for
$j=1$.
\begin{prop}
  For $d\geq 1$ and $1\leq j\leq d$,
  \begin{equation}\label{AB}
    (1+t)^{d-1}A_{d,j}(t)=T_{d,j}(t)
  \end{equation}
\end{prop}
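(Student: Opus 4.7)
The natural strategy is induction on $d$, using the two recurrences already established: \eqref{Arecurrence} for $A_{d,j}(t)$ and \eqref{Trecurrence} for $T_{d,j}(t)$. Both recurrences have exactly the same shape, namely a polynomial multiplier of the previous term plus a first-order differential operator applied to it, so the identity $(1+t)^{d-1}A_{d,j}(t)=T_{d,j}(t)$ should be forced purely by matching coefficients after one differentiation of the inductive hypothesis.

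For the base case $d=1$, both sides equal $1$ (one checks directly that $A_{1,1}(t)=1$ and that $B^+_{1,1}(t)=1$, $B^-_{1,1}(t)=0$, so $T_{1,1}(t)=1$; and $(1+t)^0=1$). For the inductive step, assume $T_{d-1,j}(t)=(1+t)^{d-2}A_{d-1,j}(t)$. Differentiating this identity via the product rule gives
\[
T_{d-1,j}'(t)=(d-2)(1+t)^{d-3}A_{d-1,j}(t)+(1+t)^{d-2}A_{d-1,j}'(t),
\]
which I will rearrange as
\[
(1+t)^{d-1}A_{d-1,j}'(t)=(1+t)T_{d-1,j}'(t)-(d-2)\,T_{d-1,j}(t).
\]
I would also rewrite the obvious identity $(1+t)^{d-1}A_{d-1,j}(t)=(1+t)T_{d-1,j}(t)$. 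These two substitution formulas are the only bridges needed between the $A$-world and the $T$-world.

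Next I would multiply the recurrence \eqref{Arecurrence} through by $(1+t)^{d-1}$ and apply both substitutions on the right-hand side:
\[
(1+t)^{d-1}A_{d,j}(t)=\bigl(1+(d-2)t\bigr)(1+t)\,T_{d-1,j}(t)+t(1-t)\bigl[(1+t)T_{d-1,j}'(t)-(d-2)T_{d-1,j}(t)\bigr].
\]
Collecting the coefficient of $T_{d-1,j}(t)$ and recognizing $t(1-t)(1+t)=t(1-t^2)$ reduces this to a single expression of the form
\[
\bigl[\,\text{polynomial in }t\,\bigr]T_{d-1,j}(t)+t(1-t^2)\,T_{d-1,j}'(t),
\]
which I then match against the recurrence \eqref{Trecurrence} for $T_{d,j}(t)$. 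The claim is that the polynomial multiplier of $T_{d-1,j}(t)$ one obtains is exactly the multiplier appearing in \eqref{Trecurrence}; this is a one-line check after expanding $(1+(d-2)t)(1+t)-(d-2)t(1-t)$.

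The only possible obstacle I anticipate is a small discrepancy in the stated coefficient of $t^2$ in the recurrence \eqref{Trecurrence}: the algebra above produces $1+t+2(d-2)t^2$ rather than $1+t+(d-2)t^2$, and a quick sanity check with $d=3,\ j=1$ (where $A_{3,1}(t)=1+t$, $B^+_{3,1}(t)=1+3t$, $B^-_{3,1}(t)=3t+t^2$, and both sides of \eqref{AB} evaluate to $(1+t)^3$) confirms the corrected coefficient. Once this is reconciled in the statement of \eqref{Trecurrence}, the induction closes cleanly and the proposition follows.
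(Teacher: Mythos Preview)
Your approach is exactly the paper's: define $S_{d,j}(t)=(1+t)^{d-1}A_{d,j}(t)$, differentiate the inductive hypothesis, plug the recurrence \eqref{Arecurrence} into $(1+t)^{d-1}A_{d,j}(t)$, and verify that $S_{d,j}$ satisfies the same recurrence \eqref{Trecurrence} as $T_{d,j}$. The computation you carry out is line-for-line what the paper does.

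Your observation about the coefficient of $t^2$ is correct and worth flagging. The algebra
\[
(1+(d-2)t)(1+t)-(d-2)t(1-t)=1+t+2(d-2)t^2
\]
is right, and if one traces the paper's own derivation of \eqref{Trecurrence} (the line beginning with $2(d-2)t^2T_{d-1,j}(t)+\cdots$) through to the end, the last two terms combine to $(1+t)T_{d-1,j}(t)$, giving
\[
T_{d,j}(t)=\bigl[1+t+2(d-2)t^2\bigr]T_{d-1,j}(t)+t(1-t^2)\,T_{d-1,j}'(t).
\]
Your sanity check at $d=3$, $j=1$ confirms this: with $T_{2,1}(t)=1+t$ one gets $(1+t)^3$ only with the factor $2(d-2)$, not $(d-2)$. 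So the paper has a typo in the final display of \eqref{Trecurrence}; once that is corrected, both arguments close identically.
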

\begin{proof}
  We will show that both sides of \eqref{AB} satisfy the same recurrence relation. It can be easily verified that the equality hold for $d\leq 2$.
 Let $$S_{d,j}(t):=(1+t)^{d-1}A_{d,j}(t).$$
 It is clear that
\begin{equation}\label{dS}
  \frac{d}{dt}(S_{d-1,j}(t))=(1+t)^{d-1}\frac{d}{dt}(A_{d-1,j}(t))+(d-1)S_{d-1,j}(t)
\end{equation}
 Using \eqref{Arecurrence} and \eqref{dS}, we have
  \begin{align*}
     S_{d,j}(t)=&(1+t)^{d-1} A_{d,j}(t) \\
   =  &  (1+t)^{d-1}[(1+t(d-2))A_{d-1,j}(t)+t(1-t)\frac{d}{dt}(A_{d-1,j}(t))] \\
   =  & (1+t)(1+t(d-2))S_{d-1,j}(t)+t(1-t^2)[\frac{d}{dt}(S_{d-1,j}(t))-(d-1)S_{d-1,j}(t)] \\
   =  &[1+t+(d-2)t^2]S_{d-1,j}(t)+t(1-t^2)\frac{d}{dt}(S_{d-1,j}(t)).
  \end{align*}
  which gives the same recurrence relation \eqref{Trecurrence} as $T_{d,j}$.
\end{proof}
Since $B^+_{d,j}(t^2)$ involves only even powers in $t$ and $\frac{1}{t}B^-_{d,j}(t^2)$ involves only odd powers in $t$, so we have the following corollary.
\begin{coro}\label{Cor-E2}
  For $d\geq 1$ and $1\leq j\leq d$, we have
 \begin{equation}\label{E2}
   B^+_{d,j}(t)=E_2\big((1+t)^{d-1}A_{d,j}(t)\big),
 \end{equation} and
\begin{equation}\label{E2+}
   B^-_{d,j}(t)=E_2\big(t(1+t)^{d-1}A_{d,j}(t)\big),
 \end{equation}
 where $E_r$ is the operator  on formal series  defined by
  \begin{equation}\label{Er}
E_r\big(\sum_{k\geq 0}c_kt^k\big)=\sum_{k\geq0}c_{rk}t^k.
\end{equation}
\end{coro}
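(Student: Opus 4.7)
The plan is to read Corollary \ref{Cor-E2} as a direct parity extraction from the just-established identity
\[
(1+t)^{d-1} A_{d,j}(t) \;=\; T_{d,j}(t) \;=\; B^+_{d,j}(t^2) + \tfrac{1}{t}\,B^-_{d,j}(t^2).
\tag{$\ast$}
\]
The key observation is the parity structure of the right-hand side. The term $B^+_{d,j}(t^2)$ is manifestly a polynomial in $t^2$, so it contains only even powers of $t$. For the second term, since every $\sigma\in B^-_{d,j}$ satisfies $\sigma_d<0$, we get $\des_B(\sigma)\geq 1$, hence $B^-_{d,j}(t)$ has no constant term; so $B^-_{d,j}(t^2)$ is a polynomial in $t^2$ with no constant term, and dividing by $t$ produces a polynomial in $t$ with only odd powers. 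Therefore the decomposition in $(\ast)$ is precisely the splitting of $(1+t)^{d-1}A_{d,j}(t)$ into its even and odd parts.

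First I would record the elementary properties of the operator $E_2$ defined in \eqref{Er}: it is linear, $E_2(p(t^2))=p(t)$ for any formal series $p$, and $E_2(t\cdot q(t^2))=0$. Applying $E_2$ to both sides of $(\ast)$ kills the odd part $\frac{1}{t}B^-_{d,j}(t^2)$ and converts $B^+_{d,j}(t^2)$ into $B^+_{d,j}(t)$, yielding \eqref{E2}:
\[
E_2\!\bigl((1+t)^{d-1}A_{d,j}(t)\bigr) \;=\; E_2\!\bigl(B^+_{d,j}(t^2)\bigr) \;=\; B^+_{d,j}(t).
\]

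For \eqref{E2+}, I would first multiply $(\ast)$ through by $t$, obtaining
\[
t(1+t)^{d-1}A_{d,j}(t) \;=\; t\,B^+_{d,j}(t^2) + B^-_{d,j}(t^2).
\]
Now the roles of even and odd are swapped: $t\,B^+_{d,j}(t^2)$ has only odd powers and $B^-_{d,j}(t^2)$ has only even powers. Applying $E_2$ kills the first summand and returns $B^-_{d,j}(t)$ from the second, giving the second identity.

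There is no serious obstacle here; the corollary is a formal consequence of $(\ast)$ together with the parity bookkeeping. The only point that genuinely requires justification, and which I would make explicit, is why $B^-_{d,j}(t)$ has zero constant term, so that $\tfrac{1}{t}B^-_{d,j}(t^2)$ is an honest polynomial in $t$; this is exactly the observation already noted in the paper just before the statement of \eqref{Trecurrence}.
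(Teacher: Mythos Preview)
Your proof is correct and follows exactly the paper's approach: the paper simply notes that $B^+_{d,j}(t^2)$ contributes only even powers and $\tfrac{1}{t}B^-_{d,j}(t^2)$ only odd powers, so the corollary follows by parity extraction from the identity $(1+t)^{d-1}A_{d,j}(t)=T_{d,j}(t)$. Your write-up is just a more explicit version of that one-line justification.
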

Recall a result from \cite{brenti1989unimodal} which is a key tool to prove the real rootedness of the  polynomials $B^+_{d,j}(t)$ and $B^-_{d,j}(t)$.
\begin{teo}\label{Brenti} \cite[Theorem 3.5.4]{brenti1989unimodal}
  Let $p(x)=\sum_{i=0}^{m}a_ix^i$ be a polynomial having only real non-positive
zeros. Then for each $r\in \mathbb{N}$, the polynomial $E_r(p(x))$(defined in \eqref{Er}) has only real non-positive zeros.
\end{teo}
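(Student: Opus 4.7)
The plan is to establish Theorem~\ref{Brenti} via the Aissen--Schoenberg--Whitney (ASW) characterization of P\'olya frequency sequences. Recall that a finite sequence $(c_0, c_1, \ldots, c_N)$ of non-negative reals is called a \emph{P\'olya frequency (PF) sequence} if the associated infinite Toeplitz matrix $T=[c_{j-i}]_{i,j\geq 0}$ (with $c_k=0$ for $k<0$ or $k>N$) is \emph{totally positive}, meaning every minor of $T$ is non-negative. The ASW theorem asserts that the polynomial $\sum_k c_k x^k$ has only real non-positive zeros if and only if $(c_0,\ldots,c_N)$ is PF.

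Given this tool, the proof proceeds in three short steps. First, apply the ``only if'' direction of ASW to the hypothesis: since $p(x)=\sum_{i=0}^m a_i x^i$ has only real non-positive zeros, its coefficients are non-negative and the Toeplitz matrix $T=[a_{j-i}]$ is totally positive. Second, note that the Toeplitz matrix $T'$ of the thinned sequence $b_k:=a_{rk}$ (which is exactly the coefficient sequence of $E_r(p)$) is precisely the submatrix of $T$ indexed by rows and columns that are multiples of $r$:
\begin{equation*}
T'_{ij}=b_{j-i}=a_{r(j-i)}=a_{rj-ri}=T_{ri,\,rj}.
\end{equation*}
Third, any submatrix of a totally positive matrix is itself totally positive (its minors are minors of the ambient matrix), so $T'$ is totally positive and hence $(b_0,b_1,\ldots)$ is PF. Applying the ``if'' direction of ASW then yields that $E_r(p)(x)=\sum_k a_{rk}x^k$ has only real non-positive zeros, as required.

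The main obstacle in fleshing this sketch into a full proof is the ASW theorem itself, which is a non-trivial classical result from the theory of total positivity; however, since the statement is cited from \cite{brenti1989unimodal}, where ASW is established (and as it is also proved in Karlin's \emph{Total Positivity}), it is appropriate to take it as given. The remaining ingredients---identifying $T'$ as a submatrix of $T$ and the closure of total positivity under passing to submatrices---are essentially formal. An alternative route would be to work directly with the root-of-unity identity $E_r(p)(x^r)=\frac{1}{r}\sum_{j=0}^{r-1}p(\omega^j x)$ with $\omega=e^{2\pi i/r}$ and attempt a Grace--Walsh--Szeg\H{o} style coincidence argument, but the PF/TP route is shorter, more transparent, and fits naturally with the log-concavity themes running through the excerpt.
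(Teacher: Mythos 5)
The paper gives no proof of this statement at all: it is quoted verbatim from Brenti \cite[Theorem 3.5.4]{brenti1989unimodal} and used as a black box, so there is no in-paper argument to compare against; your proposal must be judged on its own, and it holds up. The three steps are sound: the Aissen--Schoenberg--Whitney equivalence between real-rootedness and the PF property for finite non-negative sequences is the correct tool; your identification $T'_{ij}=b_{j-i}=a_{r(j-i)}=T_{ri,rj}$ is exactly right (including the boundary check that for $j<i$ both sides vanish, since $b_k=a_{rk}=0$ for $k<0$); and the closure of total positivity under passage to submatrices is immediate because every minor of $T'$ is literally a minor of $T$. This is in fact the classical proof, and it lives squarely inside the PF/total-positivity framework that Brenti's memoir develops, so you have essentially reconstructed the argument of the cited source rather than found a new route. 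One small normalization you should make explicit: the hypothesis ``only real non-positive zeros'' does not by itself give non-negative coefficients --- it gives coefficients of constant sign $\mathrm{sgn}(a_m)$ --- so one should assume $a_m>0$ without loss of generality (harmless, as $E_r(-p)=-E_r(p)$), and dispose of the trivial case where $E_r(p)$ is constant or zero. With that caveat your argument is complete; your instinct that the root-of-unity identity $E_r(p)(x^r)=\frac{1}{r}\sum_{j=0}^{r-1}p(\omega^j x)$ would be a harder road is also correct, since no off-the-shelf coincidence theorem delivers real-rootedness from that decomposition as cleanly as the Toeplitz-submatrix observation does.
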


\begin{teo}
 The polynomials $B^+_{d,j}(t)$ and $B^-_{d,j}(t)$ are real-rooted for all $d\geq 1$ and $1\leq j\leq d$.
 \end{teo}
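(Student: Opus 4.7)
The strategy is to reduce the real-rootedness of $B^+_{d,j}(t)$ and $B^-_{d,j}(t)$ to that of the refined type-$A$ Eulerian polynomial $A_{d,j}(t)$, and then invoke the operator preservation result of Brenti (Theorem \ref{Brenti}). From Corollary \ref{Cor-E2} we have
$$B^+_{d,j}(t) = E_2\bigl((1+t)^{d-1}A_{d,j}(t)\bigr), \qquad B^-_{d,j}(t) = E_2\bigl(t(1+t)^{d-1}A_{d,j}(t)\bigr).$$
Theorem \ref{Brenti} asserts that $E_2$ preserves the property of having only real non-positive zeros, and both $(1+t)^{d-1}$ and $t$ have this property. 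Hence the theorem reduces to the following claim: $A_{d,j}(t)$ has only real non-positive zeros for every $d \geq 1$ and $1 \leq j \leq d$.

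I would prove the claim by induction on $d$ using the recurrence \eqref{Arecurrence},
$$A_{d,j}(t) = \bigl(1+(d-2)t\bigr) A_{d-1,j}(t) + t(1-t)A'_{d-1,j}(t),$$
with the base cases $d \leq 2$ verified directly. In the inductive step, assume $A_{d-1,j}$ has simple real non-positive zeros $r_1 < r_2 < \cdots < r_m$. Evaluating the recurrence at $t = r_i$ annihilates the first summand, leaving
$$A_{d,j}(r_i) = r_i(1-r_i)\, A'_{d-1,j}(r_i).$$
The factor $r_i(1-r_i)$ is $\leq 0$ throughout the interval of zeros, while $A'_{d-1,j}(r_i)$ alternates in sign across consecutive simple zeros of $A_{d-1,j}$; hence the values $A_{d,j}(r_i)$ alternate in sign, producing $m-1$ sign changes on the interval. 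A further sign analysis at $t = 0$ (using $A_{d,j}(0) = A_{d-1,j}(0)$) and as $t \to -\infty$ (comparing leading coefficients) accounts for the remaining root, and by a degree count all zeros of $A_{d,j}$ are real and non-positive.

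The main obstacle is precisely this inductive step: verifying that the differential-type operator $p \mapsto (1+(d-2)t)p + t(1-t)p'$ preserves the class of polynomials with only real non-positive zeros. Subtle points include handling the possible vanishing of $A_{d-1,j}(t)$ at $t = 0$ (for example when $j = d$, every $\sigma \in A_{d,d}$ has a descent at position $1$, forcing $t \mid A_{d,j}(t)$) and correctly matching the degree and multiplicities when passing from $A_{d-1,j}$ to $A_{d,j}$. Once this preservation lemma is established, the products $(1+t)^{d-1}A_{d,j}(t)$ and $t(1+t)^{d-1}A_{d,j}(t)$ inherit only real non-positive zeros, and a direct application of Theorem \ref{Brenti} with $r = 2$ to \eqref{E2} and \eqref{E2+} yields the conclusion.
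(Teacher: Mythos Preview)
Your approach is essentially the paper's: reduce via Corollary~\ref{Cor-E2} and Theorem~\ref{Brenti} to the real-rootedness of $A_{d,j}(t)$. The only difference is that the paper does not re-prove this last fact but simply cites it from \cite[Theorem~2]{brenti2008f}; your interlacing sketch is a reasonable route to that known result, though note that the recurrence~\eqref{Arecurrence} is only stated for $1\le j<d$, so the case $j=d$ needs a separate patch (e.g., via the symmetry $A_{d,d}(t)=t^{d-1}A_{d,1}(1/t)$).
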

 \begin{proof}
 It follows from Corollary  \ref{Cor-E2}, Theorem \ref{Brenti}  and the fact
  that $A_{d,j}(t)$ are real-rooted, see \cite[Theorem 2]{brenti2008f}.
 \end{proof}


A collection of polynomials $f_1,f_2,\ldots, f_k\in \mathbb{R}[t]$
is said to be \textit{compatible} if for all non-negative real
numbers $c_1,c_2,\ldots,c_k$, the polynomial $\sum_{i=1}^{k}c_if_i$
has only real zeros. The polynomials $f_1,f_2,\ldots, f_k\in
\mathbb{R}[t]$ are pairwise compatible if for all $i,j\in
\{1,\ldots, k\}$, $f_i$ and $f_j$ are compatible. By
\cite[2.2]{chudnovsky2007roots}, the polynomials $f_1,f_2,\ldots,
f_k$ with positive leading coefficients are pairwise compatible iff
they are compatible. In \cite{visontai2013eulerian}, the
authors gave some conditions under which a set of compatible
polynomials are mapped to another set of compatible polynomials. More precisely:

\begin{teo}\label{Comp}\cite[Theorem 6.3]{visontai2013eulerian}
  Given a set of polynomials $f_1,f_2,\ldots, f_k\in \mathbb{R}[t]$ with positive leading
coefficients satisfying for all $1\leq i<j\leq k$ that

\begin{enumerate}
  \item $f_i$ and $f_j$ are compatible, and
  \item $tf_i$ and $f_j$  are compatible.
\end{enumerate}
 Define another set of polynomials $g_1,\cdots,g_{k'}\in \mathbb{R}[t]$ by
  $$g_l(t)=\sum_{i=0}^{n_l-1}tf_i+\sum_{i=n_l}^{k}f_i,$$
for $1\leq l\leq k'$, $0\leq n_1 \leq n_2\leq \cdots  n_{k'}\leq k$.
Then  for all $1\leq i<j\leq k'$, we have

\begin{description}
  \item[a] $g_i$ and $g_j$ are compatible, and
  \item[b] $tg_i$ and $g_j$  are compatible.
\end{description}

\end{teo}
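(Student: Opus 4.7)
The plan is to reduce both assertions to statements about common interlacers of real-rooted polynomials. Recall that for polynomials with positive leading coefficients, a family is pairwise compatible if and only if it admits a common interlacer, that is, a polynomial whose real zeros separate the zeros of each member of the family. Hypotheses (1) and (2) can therefore be rephrased: any pair $(f_i,f_j)$ admits a common interlacer, and any pair $(tf_i, f_j)$ with $i<j$ admits one as well. I would begin by showing that these local interlacing conditions force the enlarged family $\{f_1,\ldots,f_k,\,tf_1,\ldots,tf_k\}$ into a single interlacing chain under an appropriate ordering, which is the main structural ingredient.

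Given this, I would pick any $1\le i<j\le k'$ and nonnegative scalars $\alpha,\beta$, and expand
$$
\alpha g_i+\beta g_j \;=\; \sum_{l<n_i}(\alpha+\beta)\,tf_l \;+\; \sum_{n_i\le l<n_j}\bigl(\alpha f_l+\beta tf_l\bigr) \;+\; \sum_{l\ge n_j}(\alpha+\beta)\,f_l.
$$
This displays $\alpha g_i+\beta g_j$ as a nonnegative combination of polynomials drawn from the enlarged family, with the crucial property that each $tf_l$ appearing has index $l$ strictly smaller than the index $l'$ of every $f_{l'}$ contributing to the overlap region. This index pattern is exactly what hypothesis (2) handles: the pair $(tf_l,f_{l'})$ is compatible whenever $l<l'$. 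The standard closure result, namely that nonnegative combinations drawn from a pairwise compatible family are real-rooted, then delivers real-rootedness of the full sum, proving (a). The argument for (b) is identical: attaching the extra factor $t$ to $g_i$ only replaces the $f_l$'s with $l\ge n_i$ by their $t$-multiples, which still lie in the enlarged family, and the index pattern remains compatible with hypothesis (2).

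The principal obstacle is establishing that the enlarged family $\{f_1,\ldots,f_k,tf_1,\ldots,tf_k\}$ really does admit a common interlacer under the ordering dictated by the expansion above. Compatibility among the $f_i$'s gives one interlacing chain; compatibility among the $tf_i$'s follows because multiplication by $t$ preserves real-rootedness and interlacing when the zeros are nonpositive (which is automatic here, since all polynomials involved have nonnegative coefficients); and hypothesis (2) threads these two chains together in the asymmetric direction $i<j$. I would formalize this by induction on $k$, verifying the base case $k=2$ directly from the definition of compatibility, and then, in the inductive step, inserting $f_{k+1}$ and $tf_{k+1}$ into the chain using the new pairwise compatibilities granted by hypotheses (1) and (2). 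Once this chain structure is in place, both (a) and (b) drop out as routine consequences.
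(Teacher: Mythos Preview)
The paper does not prove this theorem at all: it is quoted verbatim from \cite{visontai2013eulerian} and used as a black box in the proof of Theorem~\ref{h-poly}. So there is no ``paper's own proof'' to compare against, and your task was really to supply an independent argument.

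Your sketch, however, contains a genuine gap. The central structural claim --- that the enlarged family $\{f_1,\ldots,f_k,\,tf_1,\ldots,tf_k\}$ admits a common interlacer, equivalently is (pairwise) compatible --- is false in general. Take $k=2$ with $f_1=1$ and $f_2=t$: hypotheses (1) and (2) are trivially satisfied, yet $tf_2=t^2$ and $f_1=1$ are \emph{not} compatible, since $\alpha+\beta t^2$ has non-real roots for $\alpha,\beta>0$. Thus the enlarged family cannot be forced into a single interlacing chain, and the ``standard closure result'' you invoke does not apply to it.

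The error surfaces concretely in your expansion. In the middle block $n_i\le l<n_j$ both $f_l$ (with coefficient $\alpha$) and $tf_l$ (with coefficient $\beta$) appear, so the combination involves pairs $(tf_l,f_{l'})$ with $l>l'$, precisely the direction \emph{not} covered by hypothesis~(2). Your sentence ``each $tf_l$ appearing has index $l$ strictly smaller than the index $l'$ of every $f_{l'}$'' is therefore incorrect. The actual argument in \cite{visontai2013eulerian} does not pass through compatibility of the enlarged family; it instead interprets hypotheses (1)+(2) as the statement that $(f_1,\ldots,f_k)$ is an \emph{interlacing sequence} (i.e.\ $f_i\ll f_j$ for $i<j$ in the directed sense), and then shows directly --- by tracking sign changes or via an inductive interlacing argument on the partial sums $h_m=\sum_{l<m}tf_l+\sum_{l\ge m}f_l$ --- that the $g_l$ again form an interlacing sequence. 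If you want to salvage your approach, you should work with this directed interlacing relation rather than with undirected compatibility of the doubled family.
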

\subsection*{Proof of Theorem \ref{h-poly}.} Let us fix the order of polynomials $B^+_{d,j}(t)$ for $j\in \{\pm1,\pm2,\cdots, \pm d\}$ to apply Theorem \ref{Comp}. Define $$f_i:=\left\{
                          \begin{array}{ll}
                            B^+_{d,i}(t), & \hbox{$1\leq i\leq d$;} \\
                            B^+_{d,i-2d-1}(t), & \hbox{$d+1\leq i\leq 2d$.}
                          \end{array}
                        \right.
$$

We claim that the set of polynomials  $\{f_i :\ 1\leq i\leq 2d\}$
is compatible. We show it  by induction on $d$. For $d=1$, it is trivial. For $d=2$, we have $f_1=1$ and $f_i=t$ for  $2\leq i\leq 4$.
It is clear that these polynomials are pairwise compatible. Moreover, $tf_i$ and $f_j$ for $1\leq i<j\leq 4$ are also compatible.

 By Lemma \ref{B} (3) and (4), the polynomials
   $f_i$ satisfy the recurrence relation which has the same form required in  Theorem \ref{Comp}.
   Therefore, by induction hypothesis, our claim is true. In particular,   $\{f_j=B^+_{d,j}(t)\ :\ 1 \leq j\leq d\}$ is
   compatible for all $d\geq 1.$ Since $h_i(\Delta)$ is non-negative for all $i$, the $h$-polynomial $$h(\Int(\Delta),t)=\sum_{i=0}^{d}h_i(\Delta)B^+_{d+1,i+1}(t)$$ is
   real-rooted.\, \, \, \, \, \, \, \, \, \, \, \, \, \, \, \, \, \, \, \, \, \, \, \, \, \, \, \, \, \, \, \, \, \, \, \, \, \, \, \, \, \, \, \, \, \, \, \, \, \,  \, \, \, \, \, $\square$\\

\noindent At this point, we are in position to relate our results to
the Charney-Davis Conjecture. A $(d-1)$-dimensional simplicial
complex $\Delta$ with a non-negative reciprocal $h$-vector satisfies
the Charney-Davis Conjecture if
$(-1)^{\lfloor\frac{d}{2}\rfloor}h(\Delta,-1) \geq 0$ holds.
\begin{coro}\label{Chraney}
  The Charney-Davis conjecture holds for the interval subdivision of a $(d-1)$-dimensional simplicial complex $\Delta$ for which $h_i(\Delta)\geq 0$ and $h_i(\Delta)=h_{d-i}(\Delta)$ for $0\leq i\leq d.$
\end{coro}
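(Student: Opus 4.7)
The plan is to assemble Corollary \ref{Chraney} from three ingredients already established in the excerpt: Corollary \ref{reciprocal} (reciprocity is preserved under interval subdivision), the positivity corollary following Theorem \ref{h} (non-negativity of $h(\Delta)$ implies non-negativity of $h(\Int(\Delta))$, using that every $B^+(d+1,s+1,r)$ is non-negative), and the main Theorem \ref{h-poly} (real-rootedness of $h(\Int(\Delta),t)$). With these in hand the Charney--Davis inequality for $\Int(\Delta)$ becomes a purely elementary statement about palindromic real-rooted polynomials with non-negative coefficients.

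Concretely, write $p(t):=h(\Int(\Delta),t)$. By the three ingredients above, $p(t)$ has degree $d$, has only real roots, has non-negative coefficients, and is palindromic: $t^d p(1/t)=p(t)$. Since the coefficients of $p$ are non-negative, every real root is non-positive; and since $h_0(\Int(\Delta))=1\neq 0$, in fact every root is strictly negative. The palindromic symmetry $t^d p(1/t)=p(t)$ forces the multiset of roots to be stable under $r\mapsto 1/r$, so the roots decompose into reciprocal pairs $\{-a,-1/a\}$ with $a>0$, plus possibly a single self-reciprocal root if $d$ is odd; such a self-reciprocal root must equal $-1$ (the only negative fixed point of $r\mapsto 1/r$).

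Now evaluate at $t=-1$. Using $h_d(\Int(\Delta))=h_0(\Int(\Delta))=1$ we have $p(t)=\prod_i (t-r_i)$, so
\begin{equation*}
p(-1)=\prod_i(-1-r_i).
\end{equation*}
For each reciprocal pair $\{-a,-1/a\}$ the corresponding factor is
\begin{equation*}
(-1-(-a))(-1-(-1/a))=(a-1)\cdot\frac{1-a}{a}=-\frac{(a-1)^2}{a}\ \leq\ 0.
\end{equation*}
If $d=2k$ is even, then there are exactly $k$ such pairs and no self-reciprocal root, so $p(-1)=(-1)^k\prod_{\text{pairs}}\frac{(a-1)^2}{a}$, which has sign $(-1)^k=(-1)^{\lfloor d/2\rfloor}$; multiplying through gives $(-1)^{\lfloor d/2\rfloor}p(-1)\geq 0$. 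If $d=2k+1$ is odd, then exactly one root equals $-1$, so $p(-1)=0$ and the inequality holds trivially. In either case
\begin{equation*}
(-1)^{\lfloor d/2\rfloor}\,h(\Int(\Delta),-1)\ \geq\ 0,
\end{equation*}
which is the Charney--Davis inequality for $\Int(\Delta)$.

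There is essentially no obstacle beyond invoking the previously established results correctly; the only point to verify carefully is that $h_0(\Int(\Delta))=h_d(\Int(\Delta))=1$ (so that the leading coefficient of $p$ is $1$ and $0$ is not a root), which follows from $h_0(\Delta)=h_d(\Delta)=1$ together with Corollary \ref{reciprocal}, and the elementary identity $(a-1)(1/a-1)=-(a-1)^2/a$ that drives the sign count of reciprocal-pair factors.
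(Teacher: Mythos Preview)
Your proof is correct and essentially identical to the paper's: both invoke Corollary \ref{reciprocal} and Theorem \ref{h-poly} to obtain a palindromic, real-rooted polynomial with non-negative coefficients and strictly negative roots, then pair reciprocal roots to determine the sign of $p(-1)$. The only differences are cosmetic---the paper reads the sign of each pair's factor directly from the ordering $\beta<-1<1/\beta<0$ while you compute it algebraically as $-(a-1)^2/a$---and one harmless imprecision: when $d$ is odd, $-1$ may occur with any odd multiplicity rather than exactly once, but $p(-1)=0$ regardless.
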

\begin{proof} Since $\Delta$ has $h_i(\Delta)\geq 0$ and $h_i(\Delta)=h_{d-i}(\Delta)$ for $0\leq i\leq d$ so by  Corollary \ref{reciprocal}, $\Int(\Delta)$  has a reciprocal $h$-polynomial $h(\Int(\Delta),t)$. By Theorem \ref{h-poly}, we
also know that $h(\Int(\Delta),t)$ has only real zeros. Since the coefficients of $h(\Int(\Delta),t)$ are non-negative
and $h_0(\Int(\Delta))=1$, it follows that the zeros of $h(\Int(\Delta),t)$ are all strictly negative. Therefore,  if $\beta$ is a zero of $h(\Int(\Delta),t)$ then $1/\beta$ is also a  zero.
Thus,  the zeros are either $-1$ or come in pairs $\beta < -1 < 1/\beta<0$.  If $-1$ is a zero of $h(\Int(\Delta),t)$ then the
assertion follows trivially. If $-1$ is not a zero then $d$ must be even and $$h(\Int(\Delta),-1)=\prod_{i=1}^{d/2}(-1-\beta_i)(-1-1/\beta_i),$$ where for all $1\leq i\leq d/2$, $\beta_i < -1 < 1/\beta_i<0$, 
which shows that
 $h(\Int(\Delta),-1)$ has sign $(-1)^{d/2}$. Thus $$(-1)^{d/2}h(\Int(\Delta),-1)\geq 0,$$
which implies the assertion.
\end{proof}
The following Remark \ref{successive} follows from Lemma \ref{diagonalization}, Lemma \ref{eigenvector}, Theorem \ref{h-poly} and  \cite[Theorem 3]{brenti2008f}.
\begin{remark}\label{successive}
  The roots of the $h$-polynomial of successive interval subdivisions converge to some $d-1$ non-negative real numbers which depend only on the dimension of $\De$.
\end{remark}

\subsection*{ Acknowledgment} We are grateful to Prof. Volkmar Welker for introducing the subject,  continuous support and guidance. We are deeply indebted to anonymous referees for their comments and suggestions based on careful observations on the earlier version, which improved this paper in a great deal. This project was started during the First Research School on Commutative Algebra and Algebraic Geometry (August $5-17, 2017$) in IASBS, Iran. We would like to thank the organizers for providing us the opportunity and warm hospitality.

\bibliographystyle{amsalpha}
\bibliography{References}
\end{document}